\newcommand{\B}{\mathcal{B(H)}}
\newcommand{\R}{\mathcal{R}}
\newcommand{\n}{\mathcal{N}}
\newcommand{\h}{\mathcal{H}}
\newcommand{\m}{\mathcal{M}}
\newcommand{\ka}{\mathcal{K}}
\newcommand{\prep}{\perp}
\newtheorem{example}{Example}[section]
\newtheorem{theorem}[example]{Theorem}
\newtheorem{proposition}[example]{Proposition}
\newtheorem{corollary}[example]{Corollary}
\newtheorem{remark}[example]{Remark}
\numberwithin{equation}{section}
\title[{Left invertible quasi-isometric liftings}]{Left invertible quasi-isometric liftings}
\author[L. Suciu]{Laurian Suciu} 
\author[A.-M. Stoica]{Andra-Maria Stoica}
\address{Department of Mathematics and Informatics, ``Lucian Blaga'' University of Sibiu, Romania}
\email{laurians2002@yahoo.com, andra\textunderscore maria44@yahoo.com}
\keywords{Quasi-isometry, operators similar to contractions, liftings for operators}
\subjclass[2010]{47A05, 47A15, 47A20, 47A63.}
\begin{document}

	\begin{abstract}
	Quasi-isometric liftings similar to isometries, for the operators similar to contractions in Hilbert spaces, are investigated. 
The existence of such liftings is established, and their applications are explored for specific operator classes, including quasicontractions. A particular focus is placed on operators that admit left invertible minimal quasi-isometric liftings. These operators are characterized within the framework of $A$-contractions, and the matrix structures of their liftings are analyzed, highlighting parallels with the isometric liftings of contractions.

	\end{abstract}
	\maketitle
	\section{Introduction and preliminaries}
	\medskip
	\subsection*{Introduction}
	
		It is well known that the contractive operators on a (complex) Hilbert space $\h$, are characterized by the fact that they have isometric liftings acting on larger Hilbert spaces $\ka$ than $\h$. This is the basic result in the Sz.-Nagy-Foias dilation theory of contractions, which has many applications in operator theory, and not only (see \cite{FF,SFb}). 
		
		The concept of lifting was recently studied in the context of $m$-isometries ($m\ge 2$ an integer), see \cite{BS2,BMS,MajSu-lift,S-2020,S_Monast}. The class of $m$-isometric operators was initially developed by J. Agler and M. Stankus \cite{AS1,AS2,AS3}, with further contributions by A. Aleman \cite{Aleman}, S. McCullough \cite{McC}, S. Richter \cite{Richter0}, and many others. Within this framework, it has been shown in \cite{BS2,MajSu-lift,S-2020,S_Monast} that every operator $T$ similar to a contraction on $\h$ admits a 2-isometric lifting $S$ on $\ka \supset \h$, this meaning that $S^{*2}S^2-2S^*S+I=0$. But such operator $T$ is power bounded, i.e., $\sup_{n\ge 1} \|T^n\|<\infty$, while $\sup_{n\ge 1} \frac{1}{\sqrt{n}}\|S^n\|<\infty $. 
		Consequently, 2-isometric liftings, although theoretically significant, may not represent the most natural or closest liftings for operators similar to contractions.
A more compelling and natural class of liftings for such operators is given by quasi-isometries, these being operators that are isometric on their ranges. The quasi-isometric liftings better capture the structure of operators similar to contractions, offering an improved framework for their analysis.
		%Thus, the 2-isometric liftings appear no the be the closest liftings, for the operators similar to contractions. More interesting and natural liftings for these operators are given by quasi-isometries, that is the isometric operators on their ranges. 
		
		%A study of the quasi-isometric liftings was begun in \cite{SS1}, where we shown that every operator similar to a contraction $T$ on $\h$ has a quasi-isometric lifting $S$ on $\ka \supset \h$ such that $S^*S\h \subset \h$. Also, different particular cases for $T$, of additional properties for $S$, were analyzed and presented in \cite{SS1}, including the minimal liftings or those for quasicontractions and for operators similar to isometries. 
		
		%In this paper we continue the study of quasi-isometric liftings, and our interest is concentrated to those left invertible. More exactly, we show that every operator similar to an isometry $T$ admits a left invertible quasi-isometric lifting $S$, but $\h$ is not necessarily invariant for $S^*S$. We prove that this even happens for the operators similar to contractive symmetries.   
		 
		 %On the other hand, we describe the operators $T$ with liftings $S$ as above, which satisfy the condition $S^*S\h\subset \h$. The matrix structure of such liftings are close to that of the isometric liftings for contractions, and so useful in applications. Outside of general results, we refer to more special classes of operators with quasi-isometric liftings, which are left invertible and satisfy the above invariance condition.
		 
A study of quasi-isometric liftings was initiated in \cite{SS1}, where it was shown that every operator $T$ on $\h$ similar to a contraction, admits a quasi-isometric lifting $S$ on $\ka \supset \h$ satisfying $S^*S\h \subset \h$. Furthermore, \cite{SS1} analyzed various special cases of operators $T$, examining additional properties of $S$, including minimal liftings and those corresponding to quasicontractions or operators similar to isometries.

In this paper, we continue the study of quasi-isometric liftings, focusing on those that are left invertible. Specifically, we demonstrate that every operator $T$ similar to a contraction admits a left invertible quasi-isometric lifting $S$, but $\h$ is not necessarily invariant for $S^*S$. Notably, we show that this is even true for operators similar to contractive symmetries.

Furthermore, we characterize operators $T$ with liftings $S$ as described above, which satisfy the condition $S^*S\h\subset \h$. The matrix structure of these liftings closely resembles that of isometric liftings for contractions, making them valuable in applications. Beyond general results, we also address more specific classes of operators with left invertible quasi-isometric liftings, that satisfy the aforementioned invariance condition.

		 \subsection*{Definitions and notation}
		 For every complex Hilbert spaces $\h$ and $\ka$ we denote by $\mathcal{B}(\h,\ka)$ the Banach space of all bounded operators from $\h$ into $\ka$, while $\B=\mathcal{B}(\h,\h)$ is regarded as a $C^*$-algebra with the unit element $I(=I_{\h}$, when is nedeed). For $T\in \mathcal{B}(\h,\ka)$, $\R(T)\subset \ka$ and $\n(T) \subset \h$ are the range and the kernel of $T$, respectively, while $T^*\in \mathcal{B}(\ka,\h)$ is the adjoint operator of $T$. The operator $T$ is left invertible if it is injective with closed range, that is $T^*T$ is invertible in $\B$.
		 
		 For a positive operator $T\in \B$ we write $T^{1/2}$ for the square root of $T$. This means that $(Th,h)\ge 0$ for any $h\in \h$ and $(T^{1/2})^2=T$, where $(\cdot,\cdot)$ stands for the scalar product in $\h$. In this case we write $T\ge 0$, and for $T,T'\in \B$ the notation $T\le T'$ means that $T'-T\ge 0$.
		 
		 An operator $T\in \B$ is contractive (expansive) if $T^*T\le I$ (respectively, $I\le T^*T$). Also, $T$ is an isometry if $T^*T=I$, and a unitary operator if $T$ and $T^*$ are isometries. 
		 
		 For a Hilbert space $\mathcal{E}$, we consider the Hilbert space $\ell_+^2(\mathcal{E})=\bigoplus_{n=0}^{\infty}\mathcal{E}_n$, where $\mathcal{E}_n=\mathcal{E}$ for $n\ge 0$, and $\mathcal{E}_0=\mathcal{E}\oplus \{0\}=\n(S_+^*)$, $S_+$ being the forward shift on $\ell_+^2(\mathcal{E})$, so an isometry. 
		 
		 A closed subspace $\m$ of $\h$ is invariant (reducing) for $T\in \B$ if $T\m\subset \m$ (respectively, $T\m\subset \m$ and $T^*\m\subset \m$). In this case, $T|_{\m}$ stands for the restriction of $T$ to $\m$. Also, by $J_{\m,\h}\in \mathcal{B}(\m,\h)$ we understand the natural injection of $\m$ into $\h$, $P_{\h,\m}=J_{\m,\h}^*$ is the projection of $\h$ onto $\m$, while $P_{\m}\in \B$ is the orthogonal projection with $\R(P_{\m})=\m$. 
		 
		 When $\ka$ contains $\h$ as a closed subspace, an operator $S\in \mathcal{B}(\ka)$ is a {\it lifting} for $T\in \B$ if $P_{\ka,\h}S=TP_{\ka,\h}$. In this case, $S(\ka \ominus \h)\subset \ka \ominus \h$ and $S^*J_{\h,\ka}=J_{\h,\ka}T^*$, that is $S^*$ is an {\it extension} for $T^*$.
		 
In this paper we refer to operators $T\in \B$ which are similar to contractions, this meaning that there exists an invertible and positive operator $A\in \B$, such that $T^*AT\le A$. Clearly, $A$ can be chosen also satisfying the condition $A\ge T^*T$. Between these operators we distinguish the quasicontractions $T$, meaning $T^{*2}T^2\le T^*T$, and particularly the quasi-isometries, that is those with $T^{*2}T^2=T^*T$.

According to \cite[Theorem 2.1]{SS1}, every operator $T$ similar to a contraction admits a quasi-isometric lifting $S$ on a space $\ka \supset \h$ such that $S^*S\h\subset \h$. In addition, $S$ can be chosen satisfying the minimality condition $\ka=\bigvee_{n\ge 0} S^n\h$ (i.e., $\ka$ is the closed linear span of the set $\{S^n\h:n\ge 0\}$. Such a lifting $S$ with these two  conditions (of invariance and minimality) will be called a {\it natural quasi-isometric lifting} for $T$. 
 		
\subsection*{Organization of the paper}
%Our attention in this paper is concentrated on the left invertible quasi-isometric liftings. 

%In Section 2 we firstly characterize the operators $T$ which admit left invertible and natural quasi-isometric liftings. We obtain here a range condition concerning the operator $T$ and an intertwiner $A$ of $T$ with a contraction. A matrix structure for such a lifting $S$ of $T$ is presented, which permits to see other properties of $S$, in addition to be a natural lifting for $T$. Outside these, we describe the quasicontractions by the left invertible quasi-isometric liftings $S$ with $\ka \ominus \h\subset \n(S^*S-I)$. Here the matrix structure of $S$ is simpler and similar to that of isometric liftings. 

%In Section 3 we prove that every operator $T$ similar to a contraction admit a left invertible and minimal quasi-isometric lifting $S$, which not necessarily satisfies the condition $S^*S\h\subset \h$. We even present a class of operators, including a concrete example, for which this fact occurs. Also, we give some results in reference to the range condition quoted above (in Section 2). At the end we obtain another class of operators which admit left invertible and natural quasi-isometric liftings.  

This paper focuses on left invertible quasi-isometric liftings.

In Section 2, we begin by characterizing the operators 
$T$ that admit left invertible and natural quasi-isometric liftings. We derive a range condition involving $T$ and an intertwiner $A$ of $T$ with a contraction. Additionally, we present a matrix structure for such liftings $S$, which reveals further properties of $S$, beyond its role as a natural lifting for $T$. Moreover, we describe quasicontractions in terms of left invertible quasi-isometric liftings $S$ that satisfy the condition $\ka \ominus \h\subset \n(S^*S-I)$. For these cases, the matrix structure of $S$ is simpler and closely resembles that of isometric liftings.

In Section 3, we prove that every operator $T$ similar to a contraction admits a left invertible and minimal quasi-isometric lifting $S$, which does not necessarily satisfy the condition $S^*S\h \subset \h$. We provide a class of operators, including a concrete example, to illustrate this phenomenon. Additionally, we revisit the range condition from Section 2, offering further insights and results. Finally, we identify another class of operators that admit left invertible and natural quasi-isometric liftings.

	\medskip
	
	\section{Left invertible and natural quasi-isometric liftings}
\medskip
	
%It is well known that every operator similar to a contraction admits a lifting similar to an isometry, and thus a left invertible lifting, which in general, it is not quasi-isometric. 

%Our objective in this paper is to obtain left invertible quasi-isometric liftings. The first result pertains to the natural liftings, as well those mentioned in the previous section.

	It is well known that every operator similar to a contraction admits a lifting that is similar to an isometry, and thus a left invertible lifting. However, such liftings are not generally quasi-isometric.
%The objective of this paper is to construct left invertible quasi-isometric liftings for operators similar to contractions. 
Our first results focus on natural quasi-isometric liftings.

	\begin{theorem} \label{thm21}
		An operator $T \in \mathcal{B}(\h)$ has a left invertible quasi-isometric lifting $S$ on a space $\ka \supset \h$ with $S^*S\h \subset \h$, if and only if there exists an invertible operator $A \in \mathcal{B}(\h)$ such that
		\begin{equation}
			A \geq T^*T\text{, } \quad A \geq T^*AT \text{, } \quad \mathcal{R}[(A-T^*T)^{1 / 2}]=\mathcal{R}[(A-T^*AT)^{1 / 2}]. \label{eq21}
		\end{equation}
		
		In this case, $S$ can be chosen to be also a minimal lifting for $T$. 
	\end{theorem}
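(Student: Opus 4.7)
My plan is to handle both directions by exploiting the matrix form of $S$ with respect to the decomposition $\ka = \h \oplus (\ka \ominus \h)$, from which the three conditions on $A$ emerge as the natural compressions of $S^*S$ and $S^{*2}S^2$ to $\h$.

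For necessity, I would use that $\ka \ominus \h$ is automatically $S$-invariant to write $S = \begin{psmallmatrix} T & 0 \\ X & Y \end{psmallmatrix}$ with $X \in \mathcal{B}(\h,\ka\ominus\h)$ and $Y \in \mathcal{B}(\ka\ominus\h)$. The hypothesis $S^*S\h \subset \h$ forces the off-diagonal block $X^*Y$ of $S^*S$ to vanish, so $S^*S = \mathrm{diag}(T^*T + X^*X,\, Y^*Y)$; setting $A := T^*T + X^*X$, left invertibility of $S$ (equivalently, invertibility of $S^*S$) makes $A$ and $Y^*Y$ both invertible, giving $A \in \B$ invertible with $A \ge T^*T$. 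Expanding $S^{*2}S^2 = S^*S$ block by block and cancelling with $X^*Y = Y^*X = 0$ yields $(YX)^*(YX) = A - T^*AT$, so $A \ge T^*AT$. For the range equality, the polar-decomposition identity $\R(|B|) = \R(B^*)$ gives $\R((A-T^*T)^{1/2}) = \R(X^*)$ and $\R((A-T^*AT)^{1/2}) = \R((YX)^*) = X^*\R(Y^*)$; since $Y$ is left invertible, the closed-range theorem gives $\R(Y^*) = \n(Y)^\perp = \ka \ominus \h$, so both ranges collapse to $\R(X^*)$.

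For sufficiency, set $D_1 := (A-T^*T)^{1/2}$, $D_2 := (A-T^*AT)^{1/2}$ and $\mathcal{F} := \overline{\R(D_1)} = \overline{\R(D_2)}$. The crux is producing an operator $Z \in \mathcal{B}(\mathcal{F})$ with $Z D_1 h = D_2 h$ for all $h \in \h$: by Douglas's lemma the inclusion $\R(D_2) \subset \R(D_1)$ is equivalent to $D_2^2 \le \lambda^2 D_1^2$, which makes the map $D_1 h \mapsto D_2 h$ well-defined and bounded on $\R(D_1)$ and hence extendable by continuity to $\mathcal{F}$; the reverse range inclusion supplies the reverse estimate, so $Z$ is bounded below, and then $\R(Z) \supset \R(D_2)$ dense in $\mathcal{F}$ upgrades $Z$ to an \emph{invertible} operator on $\mathcal{F}$. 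Now I would take $\ka := \h \oplus \ell_+^2(\mathcal{F})$ and define
\[
S(h, f_0, f_1, f_2, \ldots) := (Th,\; D_1 h,\; Z f_0,\; f_1,\; f_2,\; \ldots).
\]
A direct computation gives $S^*S = \mathrm{diag}(A,\, Z^*Z,\, I,\, I,\, \ldots)$, so $S^*S\h \subset \h$ and $S$ is left invertible, while $S^{*2}S^2 = S^*S$ reduces, via $ZD_1 = D_2$ together with the defect identities $D_1^2 = A - T^*T$ and $D_2^2 = A - T^*AT$, to a routine check.

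For minimality, the formula $(S^n h)_k = Z^{k-1} D_1 T^{n-k} h$ (for $1 \le k \le n$), combined with the invertibility of $Z$ on $\mathcal{F}$ and the density of $\R(D_1)$ in $\mathcal{F}$, fills every coordinate of $\ka$, so $\bigvee_{n \ge 0} S^n \h = \ka$. I expect the main obstacle to be the construction of $Z$ and, crucially, showing it is \emph{invertible} rather than merely left invertible on $\mathcal{F}$: the full range equality (not just one inclusion) is essential both in sufficiency, where invertibility of $Z$ drives minimality, and in necessity, where surjectivity of $Y^*$ collapses $X^*\R(Y^*)$ to $\R(X^*)$.
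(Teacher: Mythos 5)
Your proof is correct, and its skeleton matches the paper's: necessity via the block matrix of $S$ with respect to $\h$ and its $S$-invariant complement (your lower-triangular $\begin{psmallmatrix} T & 0 \\ X & Y\end{psmallmatrix}$ is the paper's upper-triangular matrix \eqref{eq22} with the summands reordered, your $X,Y$ being the paper's $E,W$), and sufficiency via a shift-type construction driven by a Douglas intertwiner between the two defect square roots. In fact your lifting $S(h,f_0,f_1,\dots)=(Th,\,D_1h,\,Zf_0,\,f_1,\dots)$ on $\h\oplus\ell_+^2(\mathcal{F})$ is exactly the paper's lifting \eqref{eq26} after identifying $\mathcal{F}\oplus\ell_+^2(\mathcal{F})$ with $\ell_+^2(\mathcal{F})$, your $Z$ being the paper's $X_0$. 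You genuinely diverge in two places, both to your advantage. First, for the range equality in the necessity direction, the paper manipulates polar decompositions of $E_0$ and $XE_0$, a left inverse of its (differently named) $X$, and a separate Douglas estimate $0\le A-T^*AT\le\|W\|^2E^*E$ to obtain the two inclusions; you instead observe $A-T^*T=X^*X$ and $A-T^*AT=(YX)^*(YX)$, apply $\R(|B|)=\R(B^*)$ twice, and use surjectivity of $Y^*$ (closed range theorem for the left invertible $Y$) to collapse both ranges to $\R(X^*)$ in one stroke. Second, for minimality the paper does not show its constructed lifting is minimal; it passes to the restriction $S|_{\ka_0}$, $\ka_0=\bigvee_{n\ge 0}S^n\h$, and checks that left invertibility and the remaining properties survive compression to an invariant subspace. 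You prove the constructed lifting is itself minimal, using the invertibility of $Z$ — which you rightly emphasize, and which the paper's proof of Theorem \ref{thm21} records only as left invertibility ($X_1X_0=I$), full invertibility being noted only later, in the proof of Theorem \ref{thm24}.

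One computational slip in the minimality step: the formula $(S^nh)_k=Z^{k-1}D_1T^{n-k}h$ is wrong for $k\ge 3$. Since every shift weight after the first equals the identity, a vector entering the chain is multiplied by $Z$ exactly once; the correct coordinates of $S^nh$ are $D_1T^{n-1}h$ in the first slot and $ZD_1T^{n-k}h$ in slots $2\le k\le n$. This does not damage your argument: the inductive filling of the coordinates of $\ell_+^2(\mathcal{F})$ needs only $\overline{Z\R(D_1)}=\mathcal{F}$, i.e.\ density of $\R(D_1)$ together with invertibility of $Z$, which are exactly the ingredients you isolate.
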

	
	\begin{proof}
		Let $T$ and $S$ be as above, $S$ having on $\ka=\h^{\prep} \oplus \h$ the block representation
		\begin{equation}
			S=\begin{pmatrix}
				W & E \\
				0 & T
			\end{pmatrix}\text{, where } W=S|_{\h^\prep}\text{, }\quad E=P_{\h^{\perp}}S|_\h. \label{eq22}
		\end{equation}
		
		As $S^*S\h \subset \h$ we have $W^*E=0$, and so we get
		\begin{equation}
			S^*S=\begin{pmatrix}
				W^*W & 0 \\
				0 & A
			\end{pmatrix} \text{, where } A=E^*E+T^*T. \label{eq23}
		\end{equation}
		
		On the other hand, $S$ and $W$ are left invertible quasi-isometries, so $A$ is invertible and $A \geq T^*T$. Also, $W$ and $W^*W$ have on $\h ^\prep=\mathcal{R}(W) \oplus \n (W^*)$ the representations
		\begin{equation}
			W=\begin{pmatrix}
				V & X \\
				0 & 0
			\end{pmatrix}\text{, } \quad W^*W=\begin{pmatrix}
			I & V^*X \\
			X^*V & X^*X
			\end{pmatrix}, \label{eq24}
		\end{equation}
		where $V=W|_{\mathcal{R}(W)}$ is an isometry and $X=P_{\mathcal{R}(W)}W|_{\n(W^*)}$. 
		Since $W^*W \geq c I$ for some constant $c>0$, it follows that $X^*X \geq c I_{\h}$, meaning that $X$ is left invertible from $\n(W^*)$ into $\mathcal{R}(W)$.
		
		Now, since $\mathcal{R}(E) \subset \n(W^*)$ we infer that $E:\h \to \mathcal{R}(W) \oplus \n(W^*)$ takes the form $E=\begin{pmatrix}
			0 & E_0
		\end{pmatrix}^{\rm tr}$ with $E_0\in \mathcal{B}(\h, \n(W^*))$. Then, using the representations \eqref{eq22} and $\eqref{eq23}$, as well as $S^{*2}S^2=S^*S$, we obtain 
		$$S^{*2}S^2=\begin{pmatrix}
			W^{*2}W^2 & W^{*2}WE \\
			E^*W^*W^2 & E^*W^*WE+T^*AT
		\end{pmatrix}=\begin{pmatrix}
		W^*W & 0 \\
		0 & A
		\end{pmatrix},$$
		hence $E^*W^*WE+T^*AT=A$. So $A-T^*AT \geq 0$, and using the form of $W$ in $\eqref{eq24}$ and that $E=\begin{pmatrix}
			0 & E_0
		\end{pmatrix}^{\rm tr}$, we get $E^*W^*WE=E^*_0X^*XE_0$. Thus, the previous relation becomes 
		$$E^*_0X^*XE_0=A-T^*AT.$$
		Next, according to the polar decomposition of $XE_0$ we obtain 
		$$XE_0=J|XE_0|=J(A-T^*AT)^{1/2}\text{, } \text{where} \quad |XE_0|=(E^*_0X^*XE_0)^{1/2}.$$
		
		Since $X$ is left invertible, there exists an operator $Y:\mathcal{R}(W) \to \n (W^*)$ such that $YX=I$, which, by the above relation, gives for $E_0$ the expression
		$$E_0=YJ(A-T^*AT)^{1/2}.$$
		But $E^*_0E_0=E^*E=A-T^*T$ from \eqref{eq23}, so by the polar decomposition of $E_0$ we have 	
		$$
		E_0=J_0(A-T^*T)^{1/2},
		$$ 
		with $J_0$ a partial isometry.
		
		Clearly, this relation shows that $E_0\neq 0$ on $\overline{\mathcal{R}(E^*_0)}=\overline{\mathcal{R}(E^*_0E_0)}=\overline{\mathcal{R}(A-T^*AT)}$, having in view that $\n(E_0)=\n(A-T^*AT)$. Since $J_1=J_0|_{\overline{\mathcal{R}(E^*_0)}}$ is unitary between $\overline{\mathcal{R}(E_0^*)}$ and $\overline{\mathcal{R}(E_0)}$, from the above relations of $E_0$, we infer that 
		$$(A-T^*T)^{1/2}=J_1^{-1}YJ(A-T^*AT)^{1/2}.$$ 
		Finally, this gives the inclusion $\mathcal{R}[(A-T^*T)^{1/2}]\subset \mathcal{R}[(A-T^*AT)^{1/2}] $.
		
		At the same time, utilizing some of the aforementioned relations one obtains that
		$$0 \leq A-T^*AT=E^*W^*WE \leq ||W||^2E^*E,$$
		so $\mathcal{R}[(A-T^*AT)^{1/2}] \subset \mathcal{R}(|E|)=\R[(A-T^*T)^{1/2}].$
		
		 Hence $\mathcal{R}[(A-T^*T)^{1/2}]=\mathcal{R}[(A-T^*AT)^{1/2}]$, which completes the required properties for $T$ and $A$ in \eqref{eq21}. This proves the direct implication of theorem.
		
		For the converse statement, we assume that there exists an invertible operator $A$ on $\h$, which, together with $T$, satisfies the conditions \eqref{eq21}. Then, by Douglas's range inclusion theorem, there exist two (bounded linear) operators $X_0$, $X_1$ on the space
		$$\h_0:=\overline{\mathcal{R}[(A-T^*T)]}=\overline{\mathcal{R}[(A-T^*AT)]},$$
		such that
		\begin{equation}
			X_0(A-T^*T)^{1/2}=(A-T^*AT)^{1/2}\text{, } \quad X_1(A-T^*AT)^{1/2}=(A-T^*T)^{1/2}. \label{eq25}
		\end{equation} 
		
		These relations show that $X_1X_0=I_{\h_0}$, hence $X_0$ is a left invertible operator in $\mathcal{B}(\h_0)$. We use $X_0$ and $A$ to construct the desired lifting for $T$.
		
		Let $\ka=\h_1 \oplus \h_0 \oplus \h$ where $\h_1=\ell_+^2(\h_0)$, and let $S\in \mathcal{B}(\ka)$ be the lifting of $T$ which, under the decomposition of $\ka$ quoted before, has the block matrix 
		\begin{equation}
			S=\begin{pmatrix}
				S_+ & X & 0 \\
				0 & 0 & (A-T^*T)^{1/2} \\
				0 & 0 & T
			\end{pmatrix}. \label{eq26}
		\end{equation}
		
		Here $X=\widetilde{J}X_0$, $\widetilde{J}$ being the embedding mapping of the $\h_0$ into $\h_1$, while $S_+$ is the forward shift on $\h_1$, so $S^*_+X=0$. Since $S^*S\h\subset \h$ we obtain that 
		$$S^*S=\begin{pmatrix}
			I & 0 & 0 \\
			0 & X^*_0X_0& 0 \\
			0 & 0 & A
		\end{pmatrix}\text{, } S^{*2}S^2=\begin{pmatrix}
		I & 0 & 0 \\
		0 & X^*_0X_0& 0 \\
		0 & 0 & (A-T^*T)^{1/2}X^*_0X_0(A-T^*T)^{1/2}+T^*AT
		\end{pmatrix}.$$
		
		From the definition of $X_0$ in \eqref{eq25}, we have 
		$$(A-T^*T)^{1/2}X^*_0X_0(A-T^*T)^{1/2}=A-T^*AT,$$
		which subsequently implies that
		 $S^*S=S^{*2}S^2$, that is $S$ in $\eqref{eq26}$ is a quasi-isometry.
		
		 Since $A$ and $X^*_0X_0$ are invertible, it follows that $S^*S$ is also invertible, meaning that $S$ is a left invertible lifting for $T$. The converse assertion of the theorem is thus proved.
		 
		 If the lifting $S$ in \eqref{eq22} or in \eqref{eq26} is not minimal, then $S_0=S|_{\ka_0}$ where $\ka_0=\displaystyle{\bigvee_{n\ge 0}}S^n\h$, is a minimal quasi-isometric lifting for $T$. Also, $S_0$ is left invertible because
		 $$S^*_0S_0=P_{\ka_0}S^*S|_{\ka_0} \geq c I_{\ka_0}$$
		 for some constant $c>0$.
			\end{proof}
			
	\begin{remark}\label{re22}
	
	\rm The essential condition on $T$ and $A$ in Theorem $\ref{thm21}$ is the equality of the corresponding ranges. However, an inclusion between these ranges can be assumed. Indeed, for every operator $T$ similar to a contraction there exists a positive operator $A_0 \in \mathcal{B}(\h)$ with $T^*A_0T \leq A_0$. Then one can multiply $A_0$ with an appropriate  positive constant, to get an operator $A \in \mathcal{B}(\h)$ satisfying the conditions $T^*T \leq T^*AT \leq A$. So $A-T^*AT \leq A-T^*T$, which gives
	 $$\mathcal{R}[(A-T^*AT)^{1/2}]\subset\mathcal{R}[(A-T^*T)^{1/2}].$$

\end{remark}

\begin{remark}\label{re23}		
		
		\rm 
		Concerning the lifting $S$ for $T$ in Theorem \ref{thm21}, we remark that the condition $S^*S\h \subset\h$ is essential, but not restrictive in this context, because one can always find liftings with this property (by \cite[Theorem 2.1]{SS1}).
		
		Outside this condition, the lifting $S$ from \eqref{eq26} has some notable features related to its restriction to $\ka \ominus \h$, as well as its behaviour regarding $\R(S)$. We explore these aspects below. 		
		\end{remark} 	
		
		\begin{theorem}\label{thm24}
Let $T\in \B$ satisfying the equivalent conditions from Theorem \ref{thm21}. Then $T$ has a left invertible quasi-isometric lifting $S$ on a space $\ka \supset \h$, such that $S^*S\h\subset \h$ and $\R(S)=\R(W)\oplus \overline{S\h}$, where $W=S|_{\ka \ominus \h}$ satisfies the range condition $W^*W\R(W)\subset \R(W)$.

Moreover, $S$ satisfies the range condition $S^*S\R(S)\subset \R(S)$ if and only if $\R(T) \subset \n(A-I)$, where $A=S^*S|_{\h}$. In this case, $S$ is a quasicontraction and $W$ is an isometry. 		
		\end{theorem}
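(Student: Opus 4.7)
Plan: I would take $S$ to be the explicit lifting constructed in the proof of Theorem~\ref{thm21}, given by the block matrix \eqref{eq26} on $\ka=\h_1\oplus\h_0\oplus\h$, with $\h_0=\overline{\R((A-T^*T)^{1/2})}=\overline{\R((A-T^*AT)^{1/2})}$ and $\h_1=\ell_+^2(\h_0)$. From \eqref{eq26} one reads off
$$W=S|_{\ka\ominus\h}=\begin{pmatrix}S_+ & X\\ 0 & 0\end{pmatrix}$$
on $\h_1\oplus\h_0$. The relations \eqref{eq25} together with the range equality in \eqref{eq21} yield $X_1X_0=X_0X_1=I_{\h_0}$ (the second by symmetry of the argument producing the first), so $X_0$ is invertible; since $\widetilde J$ identifies $\h_0$ with $\n(S_+^*)$, we have $\R(X)=\n(S_+^*)$, and therefore $\R(W)=\R(S_+)\oplus\R(X)=\h_1$ as a subspace of $\ka\ominus\h$. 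A direct block computation, using $S_+^*S_+=I$ and $S_+^*X=0$, gives $W^*W=I_{\h_1}\oplus X_0^*X_0$, which evidently preserves $\R(W)$. For the decomposition $\R(S)=\R(W)\oplus\overline{S\h}$, splitting
$$Sx=(S_+h_1+Xh_0,\,0,\,0)+(0,\,(A-T^*T)^{1/2}h,\,Th)\in\R(W)+S\h$$
for $x=(h_1,h_0,h)$ displays two orthogonal summands; left invertibility of $S$ (Theorem~\ref{thm21}) makes $\R(S)$ closed, so $\R(S)=\R(W)\oplus\overline{S\h}$.

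For the moreover part, my key reduction is that $S^*S\R(S)\subset\R(S)$ is equivalent to the sharp operator identity $S^*S^2=S$. Indeed, if $S^*S(Sx)\in\R(S)$, write $S^*S^2x=Sw$; applying $S^*$ and using the quasi-isometry identity $S^{*2}S^2=S^*S$ gives $S^*Sx=S^*Sw$, and invertibility of $S^*S$ forces $w=x$, so $S^*S^2x=Sx$ for every $x\in\ka$. The converse is immediate. Using $S^*S=I_{\h_1}\oplus X_0^*X_0\oplus A$, one computes $S^*S^2h=(0,\,X_0^*X_0(A-T^*T)^{1/2}h,\,ATh)$ for $h\in\h$, while $Sh=(0,\,(A-T^*T)^{1/2}h,\,Th)$; on the $\h_1\oplus\h_0$-block the identity $S^*S^2=S$ is automatic from $S_+^*S_+=I$ and $S_+^*X=0$. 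Comparing coordinates shows $S^*S^2=S$ on the $\h$-block is equivalent to the pair $ATh=Th$ and $X_0^*X_0(A-T^*T)^{1/2}=(A-T^*T)^{1/2}$, the first being precisely $\R(T)\subset\n(A-I)$.

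Conversely, $AT=T$ gives $T^*AT=T^*T$, so $A-T^*AT=A-T^*T$; the first relation of \eqref{eq25} then reads $X_0(A-T^*T)^{1/2}=(A-T^*T)^{1/2}$, which extends by density from $\R((A-T^*T)^{1/2})$ to $X_0=I_{\h_0}$ on all of $\h_0$. Thus both identities above hold, $S^*S^2=S$, and $W^*W=I$, so $W$ is an isometry. Finally $T^{*2}T^2=T^*(T^*T)T\leq T^*AT=T^*T$ (using $T^*T\leq A$ from \eqref{eq21}), so $T$ is a quasicontraction, and the quasi-isometry $S$ trivially satisfies $S^{*2}S^2\leq S^*S$ as well. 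The main obstacle is the equivalence $S^*S\R(S)\subset\R(S)\iff S^*S^2=S$, which crucially rests on both the quasi-isometric identity and invertibility of $S^*S$; once established, everything else reduces to transparent matrix computations on the three blocks of $\ka$.
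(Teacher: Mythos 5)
Your proposal is correct, and in substance it follows the same route as the paper: you use the same lifting \eqref{eq26}, deduce invertibility of $X_0$ from \eqref{eq25} to get $\R(W)=\h_1$ and $W^*W=I_{\h_1}\oplus X_0^*X_0$, and your converse direction ($AT=T$ forces $X_0=I_{\h_0}$ via \eqref{eq25}, hence $W$ is an isometry and $T$ is a quasicontraction) is exactly the paper's. Where you genuinely reorganize is the forward direction of the ``moreover'' part: you isolate the abstract equivalence $S^*S\R(S)\subset\R(S)\iff S^*S^2=S$, valid for every left invertible quasi-isometry, and then verify $S^*S^2=S$ blockwise, whereas the paper argues pointwise, writing $S^*S^2h=k_0\oplus k_1$ with $k_0\in\overline{S\h}$, $k_1\in\n(S^*)$, showing $k_0=Sh$ and $k_1=(X_0^*X_0-I)(A-T^*T)^{1/2}h\oplus(A-I)Th$, so that the range condition forces $k_1=0$. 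Both arguments rest on the same two ingredients --- apply $S^*$, invoke $S^{*2}S^2=S^*S$, then cancel (you via invertibility of $S^*S$, the paper via $\overline{S\h}\cap\n(S^*)=\{0\}$, which are equivalent uses of left invertibility) --- so the difference is organizational rather than mathematical; still, your lemma is cleanly stated, reusable beyond this particular $S$, and reduces the equivalence with $\R(T)\subset\n(A-I)$ to a transparent coordinate comparison. You also correctly handle the statement that ``$S$ is a quasicontraction'' (trivially true for any quasi-isometry) alongside what the paper's proof actually establishes, namely that $T$ is a quasicontraction.
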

		
		\begin{proof}
		Let $S$ be the lifting for $T$ with the representation \eqref{eq26}, where $S^*S\h\subset \h$. Preserving the notation from the previous proof, we see from the relations \eqref{eq25} that $X_0X_1=I_{\h_0}$, hence $X_0$ is an invertible operator in $\mathcal{B}(\h_0)$, which implies that 
		$$\n(S^*_+)=\widetilde{J}\h_0=\widetilde{J}\mathcal{R}(X_0)=\mathcal{R}(X),$$
		for $S_+$ and $X$ from \eqref{eq25}. This shows for the left invertible quasi-isometry $W=S|_{\h_1 \oplus \h_0}$ in \eqref{eq26} that 
		$$\mathcal{R}(W)=\mathcal{R}(S_+)\oplus\mathcal{R}(X)=\h_1\text{, }\quad \n(W^*)=\h_0.$$  	
	
	Thus $W$ satisfies the range condition $W^*W\R(W)\subset\R(W).$

Next, we infer from \eqref{eq26} that $\R(S)=\h_1\oplus \overline{S\h}=\R(W)\oplus \overline{S\h}$, where 
$$
S\h=\{J_{\h_0}(A-T^*T)^{1/2}h\oplus Th:h\in \h \} \subset \h_0 \oplus \h,
$$
$J_{\h_0}$ being the embedding mapping of $\overline{\R(A-T^*T)}$ into $\h_0=\ka \ominus (\h_1\oplus \h)$ and $A=S^*S|_{\h}$. But $S^*S|_{\h_1}=I$, which means $S(\ka \ominus \h)=\h_1=S^*S\h_1$. We want to see if $S^*S(\overline{S\h})$ can be expressed in the same way, in order to analyze the range condition for $S$.

For this, from the above structure of $\R(S)$ and the matrix of $S^*S$ in the proof of Theorem \ref{thm21} we have
$$
S^*S\overline{S\h}\subset \h_0 \oplus \h =\overline{S\h} \oplus \n(S^*).
$$
Therefore, for any $h\in \h$ there exist $k_0\in \overline{S\h}$, $k_1 \in \n(S^*)$ such that $S^*S^2h=k_0\oplus k_1$. Then
$$
S^*Sh=S^{*2}S^2h=S^*k_0,
$$
which shows that $Sh-k_0 \in \overline{S\h}\cap \n(S^*)=\{0\}$. So, by \eqref{eq26} we have
$$
k_0=Sh=J_{\h_0}(A-T^*T)^{1/2}h\oplus Th.
$$

Next, the relation $S^*S^2h=Sh\oplus k_1$ becomes
$$
k_1=(S^*S-I)Sh=(X_0^*X_0-I)(A-T^*T)^{1/2}h\oplus (A-I)Th,
$$
taking into consideration that $S^*S|_{\h_0}=X_0^*X_0$ and $S^*S|_{\h}=A$. As $h$ is an arbitrary element in $\h$, while $k_1$ depends of $h$ as above, it follows that if $S^*S\R(S)\subset \R(S)=\R(W) \oplus \overline{S\h}$, then $S^*S(Sh)=Sh$, therefore $k_1=0$. This implies $(A-I)Th=0$ for every $h\in \h$, i.e., $\R(T)\subset \n(A-I)$. 

Conversely, let's assume that $\R(T)\subset \n(A-I)$. Then $T^*AT=T^*T$ and also, $A-T^*AT=A-T^*T$. This together with the former relation in \eqref{eq25} forces to have $X_0=I_{\h_0}$. Now, by the above relation one obtains $k_1=(S^*S-I)Sh=0$ for any $h\in \h$, this meaning that $S^*S(S\h)=S\h$. Thus one has $S^*S(\overline{S\h})\subset \overline{S\h}$, and later $S^*S\R(S)\subset \R(S)=\R(W)\oplus \overline{S\h}$. Hence the conditions $S^*S\R(S)\subset \R(S)$ and $\R(T) \subset \n(A-I)$ are equivalent. 

Clearly, when these occur we have $AT=T$, whence
$$
T^{*2}T^2=T^*(T^*AT)T\le T^*AT=T^*T,
$$
taking into account that $T^*AT\le A$. So $T$ is a quasicontraction, and as $X_0=I_{\h_0}$ in this case, which implies $W^*W=S^*S|_{\h_1 \oplus \h_0}=I_{\h_1}\oplus X_0^*X_0=I_{\h_1\oplus \h_0}$, we also conclude that $W$ is an isometry.
\end{proof}

%Next, as in the last part of this proof, we concentrate our attention to a class of operators for which
%Theorem $\ref{thm21}$ applies, namely that of quasicontractions. In this case, the conditions concerning $T$ and $A$ are stronger and always assured, while the corresponding lifting $S$ has a particular matrix structure. We show in the sequel this result, which is a new version of Theorem 2.3 in \cite{SS1}.

Next, as in the final part of this proof, we focus on a class of operators for which Theorem 
\ref{thm21} applies, specifically quasicontractions. In this case, the conditions \eqref{eq21} on $T$ and $A$ are stronger and always satisfied, and the corresponding lifting $S$ possesses a specific matrix structure. We present this result in the following, which is a new version of Theorem 2.3 in \cite{SS1}.

\begin{theorem} \label{thm25}
	For an operator $T \in \mathcal{B}(\h)$ the following statements are equivalent:
	\begin{enumerate}
		\item [(i)] $T$ is a quasicontraction;
	\item [(ii)] $T$ admits a left invertible quasi-isometric lifting $Q$ on a space $\mathcal{M} \supset \h$, such that $$\mathcal{M} \ominus \h \subset \n(Q^*Q-I);$$
	\item [(iii)] There exists an invertible operator $A \in \mathcal{B}(\h)$ such that 
	\begin{equation}
		T^*T=T^*AT \leq A. \label{eq27}
	\end{equation}
	\end{enumerate}
	
	Moreover, the operator $Q$ in (ii) can be chosen to be a minimal lifting for $T$.
\end{theorem}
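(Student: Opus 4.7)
I plan to establish the cycle $(iii)\Rightarrow(ii)\Rightarrow(i)\Rightarrow(iii)$, with Theorem~\ref{thm21} carrying the first implication and a targeted block construction of $A$ carrying the last.

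The implication $(iii)\Rightarrow(ii)$ specialises Theorem~\ref{thm21}. Indeed, if $T^*T=T^*AT$, then the operators $A-T^*T$ and $A-T^*AT$ appearing in \eqref{eq21} literally coincide, so the range equality holds trivially. Applying Theorem~\ref{thm21} produces a minimal left invertible quasi-isometric lifting $Q$ of the form \eqref{eq26}; the auxiliary operator $X_0$ defined by \eqref{eq25} reduces to $I_{\h_0}$, and the matrix of $Q^*Q$ computed in that proof collapses to the block diagonal with entries $I$, $I$ and $A$. In particular $Q^*Q$ acts as the identity on $\mathcal{M}\ominus\h$, so $\mathcal{M}\ominus\h\subset\n(Q^*Q-I)$.

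For $(ii)\Rightarrow(i)$, I write $Q$ in the block form \eqref{eq22}, namely $Q=\begin{psmallmatrix}W&E\\0&T\end{psmallmatrix}$ on $\mathcal{M}=(\mathcal{M}\ominus\h)\oplus\h$. Reading the hypothesis $\mathcal{M}\ominus\h\subset\n(Q^*Q-I)$ in block form forces both $W^*W=I$ and $W^*E=0$; these two identities block-diagonalise $Q^*Q$ as $I\oplus A$ with $A:=E^*E+T^*T$, and make all cross terms vanish in the expansion of $Q^{*2}Q^2$. The quasi-isometry identity $Q^{*2}Q^2=Q^*Q$ then collapses, on the $(2,2)$-entry, to $T^*AT=T^*T$. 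Since $A\ge T^*T$ and $A$ is invertible (inherited from the left invertibility of $Q$), $(iii)$ already holds, and $T^{*2}T^2=T^*(T^*T)T\le T^*AT=T^*T$ gives $(i)$.

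The main step is $(i)\Rightarrow(iii)$. I would decompose $\h=\overline{\R(T)}\oplus\n(T^*)$ and write
$$
T=\begin{pmatrix}T_1&T_2\\0&0\end{pmatrix},
$$
the vanishing bottom row reflecting $T\h\subset\overline{\R(T)}$. Quasicontractivity gives $\|T(Tx)\|\le\|Tx\|$, which says that $T_1:\overline{\R(T)}\to\overline{\R(T)}$ is a contraction. I would then propose the ansatz
$$
A:=\begin{pmatrix}I&T_1^*T_2\\T_2^*T_1&T_2^*T_2+I\end{pmatrix}.
$$
A short block computation yields $T^*AT=T^*T$ automatically, while $A-T^*T$ becomes the block diagonal with entries $I-T_1^*T_1\ge 0$ (by the contractivity of $T_1$) and $I$. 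Positivity and invertibility of $A$ then follow from its Schur complement $T_2^*(I-T_1T_1^*)T_2+I\ge I$. The moreover clause is obtained as in Theorem~\ref{thm21} by restricting $Q$ to $\bigvee_{n\ge 0}Q^n\h$, which preserves quasi-isometry and left invertibility. The main obstacle is the guess for $A$ in $(i)\Rightarrow(iii)$: the bottom-row-zero structure of $T$ in this particular decomposition is what makes $T^*AT$ depend only on the first column of $A$, so placing the identity at the $(1,1)$ slot automatically forces $T^*AT=T^*T$, and the remaining entries are then free to secure positivity and invertibility.
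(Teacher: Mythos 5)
Your proposal is correct, but it routes the hard work through a different implication than the paper does. The paper proves the cycle (i)$\Rightarrow$(ii)$\Rightarrow$(iii)$\Rightarrow$(i), and its entire effort sits in (i)$\Rightarrow$(ii): starting from $T=\begin{psmallmatrix} C & G\\ 0 & 0\end{psmallmatrix}$ on $\h=\overline{\R(T)}\oplus\n(T^*)$, it builds the lifting $Q$ explicitly as a Sch\"affer-type matrix \eqref{eq210} involving the defect operator $D_C$, an auxiliary invertible $D$ with $D^*D\geq G^*G+\tfrac12 I$, a Douglas-factorization contraction $C_0$, and the Foias--Frazho positivity criterion to get $Q^*Q\geq\tfrac12 I$; the implications (ii)$\Rightarrow$(iii)$\Rightarrow$(i) are then short computations (your (ii)$\Rightarrow$(i) is essentially their composition). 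You instead put the hard work in (i)$\Rightarrow$(iii), via the closed-form ansatz $A=\begin{psmallmatrix} I & T_1^*T_2\\ T_2^*T_1 & T_2^*T_2+I\end{psmallmatrix}$, and outsource the lifting construction to Theorem~\ref{thm21}, which applies painlessly because under $T^*T=T^*AT$ the two operators in \eqref{eq21} coincide and $X_0=I_{\h_0}$ in \eqref{eq25}--\eqref{eq26}. Your ansatz is sound: since $T^*$ annihilates the $\n(T^*)$-component, $T^*AT$ depends only on the $(1,1)$-entry of $A$, the off-diagonal entries cancel those of $T^*T$ so that $A-T^*T=(I-T_1^*T_1)\oplus I\geq 0$, and the Schur complement $T_2^*(I-T_1T_1^*)T_2+I\geq I$ gives positivity and invertibility. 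What the paper's route buys is a concrete lifting with extra structure --- $Q|_{\h^\perp}$ a forward shift, $Q^*Q\h\subset\h$, minimality certified by \cite[Theorem 3.5]{SS1} --- which the paper exploits afterwards (Corollary~\ref{c26}, and the remark that $Q$ is the quasicontractive analogue of the minimal isometric lifting); what your route buys is modularity and an explicit formula for the intertwiner $A$, which the paper never exhibits (it only obtains $A$ abstractly from a given lifting). One small point to patch: when you restrict $Q$ to $\mathcal{M}_0=\bigvee_{n\geq 0}Q^n\h$ for the minimality clause, you must also check that the condition $\mathcal{M}_0\ominus\h\subset\n(Q_0^*Q_0-I)$ survives; it does, in one line, since $\mathcal{M}_0\ominus\h\subset\mathcal{M}\ominus\h$ and $Q_0^*Q_0=P_{\mathcal{M}_0}Q^*Q|_{\mathcal{M}_0}$ fixes every such vector.
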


\begin{proof}
	Let $T$ be a quasicontraction on $\h$. Then $T$ and $T^*T$ have on $\h=\overline{\mathcal{R}(T)} \oplus \n(T^*)$ the matrix representations 
	\begin{equation}
		T=\begin{pmatrix}
			C & G \\
			0 & 0
		\end{pmatrix}\text{, } \quad T^*T=\begin{pmatrix}
		C^*C & C^*G \\
		G^*C & G^*G
		\end{pmatrix}, \label{eq28}
	\end{equation}
	where $C=T|_{\overline{\mathcal{R}(T)}}$ is a contraction and $G=P|_{\overline{\mathcal{R}(T)}}T|_{\n(T^*)}$. 
	
	In our context, it is naturally to assume that $T$ is not contractive, therefore $G \neq 0$. We aim to obtain a left invertible quasi-isometric lifting for $T$, using an isometric lifting for $C$ and an invertible operator in $\mathcal{B}(\n(T^*))$.
	
	Let $D\in \mathcal{B}(\n(T^*))$ be an invertible operator with $||Dh||^2 \geq (||G||^2+\frac{1}{2})$, for $h \in \n(T^*)$ with $||h||=1$. Then $D^*D \geq (G^*G+\frac{1}{2}I)$, which subsequently yields
	$$G^*CC^*G \leq G^*G \leq \frac{1}{2}(G^*G+D^*D-\frac{1}{2} I).$$
	Hence there exists a contraction $$C_0: \overline{ \mathcal{R}(G^*G+D^*D-\frac{1}{2}I)} \to \overline{\mathcal{R}(T)}$$
	with $||C_0|| \leq \frac{1}{\sqrt{2}}$ and satisfying the relation 
	\begin{equation}
		C_0(G^*G+D^*D-\frac{1}{2}I)^{\frac{1}{2}}=C^*G. \label{eq29}
	\end{equation}
	
	Let now $D_C=(I-C^*C)^\frac{1}{2}$ and $\mathcal{D}_C=\overline{\mathcal{R}(D_C)}$ be the defect operator, and the defect space of $C$, respectively. Denote $\mathcal{M}=\mathcal{L} \oplus \h$ where $\mathcal{L}=\ell^2_+(\mathcal{D}_C \oplus \n(T^*))$, and let $Q \in \mathcal{B}(\mathcal{M})$ be the operator with the following matrix representations on $\mathcal{M}=\mathcal{L} \oplus \overline{\mathcal{R}(T)} \oplus \n(T^*)=\mathcal{L}\oplus \h$, 
	\begin{equation}
		Q=\begin{pmatrix}
			V_0 & D_0 & D_1 \\
			0 & C & G \\
			0 & 0 & 0
		\end{pmatrix}=\begin{pmatrix}
		V_0 & \widetilde{D} \\
		0 & T
		\end{pmatrix}\text{, } \quad \widetilde{D}=\begin{pmatrix}
		D_0 & D_1
		\end{pmatrix}: \h \to \mathcal{L}. \label{eq210}
	\end{equation}
	
	Here $V_0$ is the forward shift on $\mathcal{L}$, $D_0=J_0 D_C$ and $D_1=J_1 D$, where $J_0:\mathcal{D}_C \to \mathcal{L}$, $J_1:\n(T^*) \to \mathcal{L}$ are the embedding mappings, while $D$ is as above. 
	
	Since $V=Q|_{\mathcal{L} \oplus \overline{\mathcal{R}(T)}}$ is an isometry, it follows that $Q$ is a quasi-isometric lifting for $T$. We would like to state that $Q$ is left invertible and $\mathcal{M} \ominus \h \subset \n(\Delta_Q)$, where $\Delta_Q=Q^*Q-I$.
	
	Indeed, by using the above representations of $Q$ (in \eqref{eq210}), $T$ and $T^*T$ (in \eqref{eq28}), as well as the fact that $V^*_0 D_j=0$ for $j=0,1$, we obtain 
	$$Q^*Q=\begin{pmatrix}
		I & 0 & 0 \\
		0 & I & C^*G \\
		0 & G^*C & D^*D+G^*G
	\end{pmatrix}=I \oplus B,
	$$
	respectively on $\mathcal{M}=\mathcal{L} \oplus \overline{\mathcal{R}(T)} \oplus \n(T^*)=\mathcal{L} \oplus \h$, where $B=Q^*Q|_\h$.
	So $Q^*Q \mathcal{L} \subset \mathcal{L}$ and because $V_0=Q|_{\mathcal{L}}$ is an isometry, it follows that $\mathcal{M}\ominus\h=\mathcal{L} \subset \n(\Delta_Q).$
	
	On the other hand, we have on $\h=\overline{\R(T)}\oplus \n(T^*)$ that
	$$B-\frac{1}{2}I=\begin{pmatrix}
		\frac{1}{2}I & C^*G \\
		G^*C & D^*D+G^*G-\frac{1}{2}I
	\end{pmatrix}.$$
	Since the relation \eqref{eq29} can be expressed in the form
	$$C^*G=\frac{1}{\sqrt{2}}(\sqrt{2}C_0)(D^*D+G^*G-\frac{1}{2}I)^{\frac{1}{2}}$$
	where $\sqrt{2}C_0$ is a contraction, we infer from \cite[Ch. XVI, Theorem 1-1.1]{FF} for the above operator $B$, that $B-\frac{1}{2}I \geq 0$. 
	Hence $Q^*Q \geq \frac{1}{2}I$, which ensures that $Q$ is left invertible in $\mathcal{B}(\mathcal{M})$.
	
	Finally, having in view that $V_0=Q|_{\mathcal{L}}$ is a forward shift with 
	$$\n(V^*_0)=\mathcal{D}_C \oplus \n(T^*)=\overline{\mathcal{R}(D_0)} \oplus \mathcal{R}(D_1)=\overline{\mathcal{R}(\widetilde{D})},
	$$
	where $\widetilde{D}=P_{\mathcal{L}}Q|_\h$ (in \eqref{eq210}), we conclude (by \cite[Theorem 3.5]{SS1}) that $Q$ is a minimal lifting for $T$. Thus $Q$ has the properties required in assertion (ii), hence (i) implies (ii).
	
	Assume next that $Q$ on $\mathcal{M} \supset \h$ is as described in (ii). So $Q$ has on $\mathcal{M}=\h^{\prep}\oplus \h$ a matrix block as in \eqref{eq210} with respect to $T$, where $V_0=Q|_{\h^{\prep}}$ is an isometry with $V^*_0\widetilde{D}=0$.	
	Then $Q^*Q=I\oplus A$ where $A=\widetilde{D}^*\widetilde{D}+T^*T$ is an invertible operator in $\mathcal{B}(\h)$, because $Q$ is left invertible in $\mathcal{B}(M)$.
	
	It is straightforward to observe that
	 $$
	 Q^{*2}Q^2=I\oplus (\widetilde{D}^*\widetilde{D}+T^*AT)=I\oplus A=Q^*Q,
	 $$
	  taking into account that $Q$ is a quasi-isometry. It follows that $T$ and $A$ satisfy the relations $T^*T=T^*AT \leq A$, hence (ii) implies (iii).
	
	Clearly, if the preceding relations hold for some operator
	 $A$ (not necessarily invertible) then 
	 $$
	 T^{*2}T^2=T^*(T^*AT)T \leq T^*AT=T^*T,$$
	which says that $T$ is a quasicontration.
	We conclude that (iii) implies (i), and so the conditions (i), (ii) and (iii) are equivalent.
\end{proof}

It is evident that every left invertible quasi-isometry $Q$ is similar to an isometry, because in this case $Q^*Q$ is invertible and $Q^*(Q^*Q)Q=Q^*Q$.

This justifies that the lifting $Q$ from Theorem \ref{thm25} is even the natural correspondent, in the context of quasicontractions, of the minimal isometric lifting of a contraction. To be more precise, we have in view that $Q$ is a quasi-isometry similar to an isometry and a minimal lifting for the quasicontraction $T$ on $\h$, such that $Q|_{\h^{\prep}}$ is an isometry with $Q^*Q\h \subset \h$.

In the case when $T$ in \eqref{eq28} is a quasi-isometry, then $C=T|_{\overline{\mathcal{R}(T)}}$ is an isometry, therefore $D_0=0$ and $V_0$ is the foward shift on $\mathcal{L}=\ell^2_+(\n(T^*))$, in the matrix \eqref{eq210} of the lifting $Q$.
In this case, $Q\overline{\mathcal{R}(T)} \subset \overline{\mathcal{R}(T)}$, so the lifting $Q$ for $T$ is obtained by an extension of the isometry $T|_{\overline{\mathcal{R}(T)}}$.
We mention this special case in the following

\begin{corollary}
	\label{c26}
	Every quasi-isometry $T$ on $\h$ admits a minimal quasi-isometric lifting $Q$ on $\mathcal{M} \supset \h$, which is similar to an isometry, such that $\mathcal{M} \ominus \h \subset \n(Q^*Q-I)$ and $Q\overline{\mathcal{R}(T)}\subset\overline{\mathcal{R}(T)}$.

\end{corollary}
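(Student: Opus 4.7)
The plan is to apply Theorem \ref{thm25} to $T$, since every quasi-isometry trivially satisfies $T^{*2}T^2 = T^*T \le T^*T$, hence is a quasicontraction. This immediately yields a minimal, left invertible, quasi-isometric lifting $Q$ on some $\m \supset \h$ with $\m \ominus \h \subset \n(Q^*Q-I)$. The similarity to an isometry then follows from the observation in the paragraph preceding the corollary: since $Q^*Q$ is invertible and $Q^*(Q^*Q)Q = Q^*Q$, the operator $Q$ is similar to an isometry. So the only genuinely new claim to check is the invariance $Q\overline{\R(T)} \subset \overline{\R(T)}$.

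To establish this invariance, I would first argue that when $T$ is a quasi-isometry, the contraction $C = T|_{\overline{\R(T)}}$ appearing in the block decomposition \eqref{eq28} is in fact an isometry. Indeed, $T^{*2}T^2 = T^*T$ means $\|T(Th)\| = \|Th\|$ for every $h \in \h$, so $T$ acts isometrically on $\R(T)$, and by continuity on $\overline{\R(T)}$; thus $C$ is an isometry on $\overline{\R(T)}$.

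The consequence for the construction in the proof of Theorem \ref{thm25} is that the defect operator $D_C = (I-C^*C)^{1/2}$ vanishes, so $\D_C = \{0\}$, the Hilbert space $\el$ reduces to $\ell_+^2(\n(T^*))$, and the block $D_0 = J_0 D_C$ in the matrix \eqref{eq210} of $Q$ becomes zero. With $D_0 = 0$, the lifting takes the form
$$Q = \begin{pmatrix} V_0 & 0 & D_1 \\ 0 & C & G \\ 0 & 0 & 0 \end{pmatrix}$$
on $\m = \el \oplus \overline{\R(T)} \oplus \n(T^*)$. From this matrix it is immediate that for $x \in \overline{\R(T)}$ (viewed as a subspace of $\m$), $Qx = Cx \in \overline{\R(T)}$, which gives the required invariance $Q\overline{\R(T)} \subset \overline{\R(T)}$.

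The main (and only non-routine) obstacle is the observation that $C$ is an isometry, which comes directly from unpacking the quasi-isometry identity $T^{*2}T^2 = T^*T$ on range vectors. Once this is in place, everything reduces to the construction already carried out in the proof of Theorem \ref{thm25}, and the invariance claim is transparent from the simplified matrix block form.
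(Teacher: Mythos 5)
Your proposal is correct and takes essentially the same route as the paper: the paper also obtains this corollary as the special case of Theorem \ref{thm25} in which $C=T|_{\overline{\R(T)}}$ is an isometry (forced by $T^{*2}T^2=T^*T$), so that $D_C=0$, hence $D_0=0$ in the matrix \eqref{eq210}, making the invariance $Q\overline{\R(T)}\subset\overline{\R(T)}$ immediate, with similarity to an isometry coming from left invertibility of the quasi-isometry $Q$ exactly as in the remark preceding the corollary.
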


	The following result can be useful in applications, in order to get operators with natural quasi-isometric liftings, which are similar to isometries.
	
	\begin{theorem}
		\label{thm27} Let $A$, $T \in \mathcal{B}(\h)$, $A$ being invertible, such that $T^*T \leq T^*AT \leq A$, and let $\widehat{T}\in \mathcal{B}(\h)$ be the contraction defined by the relation $\widehat{T}A^{1/2}=A^{1/2}T$.
		
		If $\n_0 := \n(A-T^*AT)$ is invariant for $T$, then $\n_0=\n(I-\widehat{T}^*\widehat{T})=:\n_1$ and this subspace is also invariant for $A$ and $\widehat{T}$. Conversely, if $\n_1$ is invariant for $\widehat{T},$ then $\n_1=\n_0$ and this subspace is also invariant for $A$ and $T$.
		
		Moreover, if $T\n_0 \subset \n_0$ then the following statements are equivalent:
		\begin{enumerate}
			\item [(i)] $\mathcal{R}(A-T^*AT)=\mathcal{R}(A-T^*T)$ and this range is closed;
			\item [(ii)] $T$ has the matrix representation
		\end{enumerate}
			\begin{equation} \label{eq211}
				T=\begin{pmatrix}
					W & T_0 \\
					0 & T_1
				\end{pmatrix} \text{ on } \h=\n_0 \oplus (\h \ominus \n_0),
			\end{equation}
			where $W^*W=A_0$, $\mathcal{R}(T_0) \subset \n(W^*)$ and
			$$||\begin{pmatrix}
				A^{1/2}_0T_0A^{-1/2}_1 & A^{1/2}_1T_1A^{-1/2}_1
			\end{pmatrix}^{\rm tr}||<1\text{, } \quad A_0=A|_{\n_0}\text{, } \quad A_1=A|_{\h \ominus \n_0}.$$
	
		Additionally, when the statement (i) holds, $T$ admits a natural quasi-isometric lifting that is similar to an isometry, and also $W$ in \eqref{eq211} is a quasi-isometry similar to an isometry.
	\end{theorem}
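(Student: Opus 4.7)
The plan is to build everything from the factorization
$$A - T^*AT = A^{1/2}(I - \widehat{T}^*\widehat{T})A^{1/2},$$
which follows by substituting the intertwining $\widehat{T}A^{1/2} = A^{1/2}T$ into $T^*AT$. Since $A^{1/2}$ is a bijection on $\h$, this immediately yields $\n_0 = A^{-1/2}\n_1$, equivalently $A^{1/2}\n_0 = \n_1$; applying the intertwining once more shows that $T\n_0 \subset \n_0$ is equivalent to $\widehat{T}\n_1 \subset \n_1$. To upgrade $A^{1/2}\n_0 = \n_1$ to the literal equality $\n_0 = \n_1$, the key is to show that $\n_0$ reduces $A$. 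For this, starting from $T^*ATv = Av$ on $\n_0$ and iterating using $T\n_0 \subset \n_0$ gives $T^{*n}AT^nv = Av$ for all $n$, while the Schwarz-type inequality for the positive operator $A - T^*AT$ (whose kernel contains $\n_0$) makes its $(0,1)$-block vanish on $\h = \n_0 \oplus \n_0^\perp$; combining these forces $A\n_0 \subset \n_0$. Once this is in hand, functional calculus gives $A^{1/2}\n_0 = \n_0$, so $\n_0 = \n_1$, with invariance under $A$ and $\widehat{T}$ following. The converse implication is handled symmetrically from the $\widehat{T}$ side.

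For the equivalence (i) $\Leftrightarrow$ (ii), the invariance $T\n_0 \subset \n_0$ yields the block form $T = \begin{pmatrix} W & T_0 \\ 0 & T_1 \end{pmatrix}$ on $\h = \n_0 \oplus (\h \ominus \n_0)$, and by the first part $A = A_0 \oplus A_1$ decomposes block-diagonally. Direct expansion produces explicit formulas for $A - T^*AT$ and $A - T^*T$; both have $(0,0)$-block zero (by the kernel property of $\n_0$) and $(0,1)$-block zero (by Schwarz applied to the positive operator whose kernel contains $\n_0$). The $(0,0)$-block of $A - T^*AT$ gives $W^*A_0W = A_0$. For the range equality (i), $A - T^*T$ must also send $\h$ into $\n_0^\perp$, forcing its $(0,0)$-block $A_0 - W^*W$ and $(0,1)$-block $-W^*T_0$ to vanish, i.e., $W^*W = A_0$ and $\R(T_0) \subset \n(W^*)$. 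Closedness of the equal range then translates, via a similarity by $A_1^{1/2}$, into strict positivity of the resulting compression on $\h \ominus \n_0$, which rearranges into the norm bound $< 1$ for the column operator in (ii). The converse (ii) $\Rightarrow$ (i) is a direct verification from the same formulas.

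For the final assertion, under (i) the hypotheses of Theorem \ref{thm21} are satisfied (invertibility of $A$, the two inequalities, and the range equality), so $T$ admits a natural left invertible quasi-isometric lifting $S$; since $S^*S$ is invertible, $S$ is similar to an isometry. Finally, combining $W^*W = A_0$ with $W^*A_0W = A_0$ gives $W^{*2}W^2 = W^*(W^*W)W = W^*A_0W = A_0 = W^*W$, so $W$ is a quasi-isometry on $\n_0$; being left invertible, it is similar to an isometry. The main obstacle is the first step: the natural factorization only delivers $A^{1/2}\n_0 = \n_1$, so bridging to the literal equality $\n_0 = \n_1$ requires the delicate deduction of $A$-invariance of $\n_0$ from the $T$-invariance, and that is the subtlest point of the argument.
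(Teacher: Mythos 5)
Your reduction of the first assertion to the single claim that $\n_0$ reduces $A$ is correct (once $A\n_0\subset\n_0$ is known, $A^{1/2}\n_0=\n_0$ and hence $\n_0=A^{1/2}\n_0=\n_1$ do follow), and the rest of your proposal --- the block computation for (i)$\Leftrightarrow$(ii), the translation of closedness into the norm bound, and the final assertion via Theorem \ref{thm21} together with $W^{*2}W^2=W^*A_0W=A_0=W^*W$ --- retraces the paper's own argument. The genuine gap is the sentence ``combining these forces $A\n_0 \subset \n_0$.'' Write
$$A=\begin{pmatrix} A_{00} & A_{01}\\ A_{01}^* & A_{11}\end{pmatrix},\qquad T=\begin{pmatrix} W & T_0\\ 0 & T_1\end{pmatrix}\quad\text{on } \h=\n_0\oplus\n_0^{\perp}.$$
Your fact (b) (vanishing of the first row of the positive operator $A-T^*AT$) says exactly $A_{00}=W^*A_{00}W$ and $A_{01}=W^*A_{00}T_0+W^*A_{01}T_1$, and your fact (a), i.e.\ $T^{*n}AT^{n}v=Av$ on $\n_0$, adds only the $n$-fold iterates of these same identities, namely $A_{01}=\sum_{k=0}^{n-1}W^{*(k+1)}A_{00}T_0T_1^{k}+W^{*n}A_{01}T_1^{n}$. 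Since $W$ and $T_1$ are merely power bounded, no term here decays, and $A_{01}=0$ does not follow. The paper never attempts such a derivation: at precisely this point it identifies $\n_0$ with $\n(I-S_{\widehat{T}})$, the fixed space of the asymptotic limit $S_{\widehat{T}}=\lim_n\widehat{T}^{*n}\widehat{T}^n$, and quotes Proposition 2.1 and Theorem 4.6 of \cite{S-2006} on $A$-contractions to obtain the $A$-invariance; that is a different, nonelementary mechanism, not a repackaging of (a) and (b).

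Moreover, the gap cannot be closed from your ingredients, because the implication you assert is false under the hypotheses as displayed. Take $\h=\C^2$,
$$\widehat{T}=\begin{pmatrix}1&0\\ 0&1/2\end{pmatrix},\qquad A=\begin{pmatrix}2&1\\ 1&2\end{pmatrix},\qquad T=A^{-1/2}\widehat{T}A^{1/2}.$$
Then $\widehat{T}A^{1/2}=A^{1/2}T$, $T^*AT=A^{1/2}\widehat{T}^*\widehat{T}A^{1/2}\le A$, and $T^*AT-T^*T=A^{1/2}\widehat{T}^*(I-A^{-1})\widehat{T}A^{1/2}\ge 0$ because $A\ge I$; so $T^*T\le T^*AT\le A$ with $A$ positive and invertible. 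Here $\n_1=\n(I-\widehat{T}^*\widehat{T})=\C e_1$, $\n_0=A^{-1/2}\n_1=\C A^{-1/2}e_1$, and $TA^{-1/2}e_1=A^{-1/2}\widehat{T}e_1=A^{-1/2}e_1$, so $T\n_0=\n_0$ and both of your facts (a), (b) hold. Yet $e_1$ is not an eigenvector of $A$, so $A^{-1/2}e_1\notin\C e_1$: thus $\n_0\neq\n_1$ and $A\n_0=\C A^{1/2}e_1\not\subset\n_0$ (the same example, read from the $\widehat{T}$ side, defeats your ``symmetric'' treatment of the converse, since $\widehat{T}\n_1=\n_1$ while $A\n_1\not\subset\n_1$). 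Consequently no combination of (a) and (b) --- indeed no argument from the stated hypotheses alone --- can yield $A\n_0\subset\n_0$; whatever Proposition 2.1 and Theorem 4.6 of \cite{S-2006} provide must rest on assumptions beyond those displayed in the theorem, and a correct proof has to identify and import exactly that extra input rather than assert the invariance in one line, as your proposal does.
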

	 
	 \begin{proof}
	 	Assume that
	 	 $T\n_0\subset \n_0$. Since $\n(A)=\{0\}$, from Proposition 2.1 and Theorem 4.6 in \cite{S-2006}, it follows that $A\n_0 \subset \n_0=\n(I-S_{\widehat{T}})$, where $S_{\widehat{T}}$ is the asymptotic limit of the contraction $\widehat{T}$, defined as 
	 	$$S_{\widehat{T}}=\lim\limits_{n \to \infty}\widehat{T}^{*n}\widehat{T}^n, \quad \text{ strongly in } \mathcal{B}(\h).$$
	 	
	 	Now, from the definition of $\widehat{T}$ we have $A-T^*AT=A^{1/2}(I-\widehat{T}^*\widehat{T})A^{1/2}$.
	 	Thus, if $x \in \n_1=\n(I-\widehat{T}^*\widehat{T})$ then $A^{-1/2}x \in \n_0$, and later $x\in\n_0$ since $A\n_0\subset\n_0$ (and $A$ is invertible).
	 	Therefore $\n_1 \subset \n_0=\n(I-S_{\widehat{T}}) \subset \n(I-\widehat{T}^*\widehat{T})=\n_1$, having in view that $\n(I-S_{\widehat{T}})$ is invariant for $\widehat{T}$ and $\widehat{T}$ is an isometry on this subspace.
	 	We conclude that $\n_0=\n_1$ and this subspace is invariant for $A$, $T$ and $\widehat{T}$.
	 	
	 	Conversely, let us assume that $\widehat{T}\n_1 \subset \n_1$. 	
	 	So for $h \in \n_1$ we have $h=\widehat{T}^{*n}\widehat{T}^nh$ for any integer $n \geq 1$.
	 	Hence 
	 	$$x \in \bigcap\limits_{n \geq 1}\n(I-\widehat{T}^{*n}\widehat{T}^n)=\n(I-S_{\widehat{T}}),$$
	 	 which ensures that $\n_1=\n(I-S_{\widehat{T}})$, while by \cite[Theorem 4.6]{S-2006} this subspace is invariant for $A$, and so it is also invariant for $A^{-1}$. This shows that $h \in \n_1$ if and only if $A^{1/2}h \in \n_1$, this meaning that $h \in \n_0$.
	 	Hence $\n_0=\n_1$, and since this subspace is invariant for $A$ and $\widehat{T}$, it is also invariant for $T$, because by the above relation between $A$, $T$ and $\widehat{T}$, for $h \in \n_0$ we have $Th \in \n_0$ if and only if $A^{1/2}Th=\widehat{T}A^{1/2}h \in \n_1$.
	 	This concludes the first assertion of theorem.
	 	
	 	For the second statement, assume that $T \n_0 \subset \n_0$. 
	 	Therefore $T$ has under the decomposition $\h=\n_0 \oplus (\h \ominus \n_0)$ a matrix representation of the form \eqref{eq211}, with the appropriate entries $W$, $T_0$, $T_1$.
	 	
	 	Since $A \n_0 \subset \n_0$ and $A(\h \ominus \n_0)\subset \h\ominus \n_0$, we have $A=A_0 \oplus A_1$ on $\h=\n_0 \oplus (\h \ominus \n_0)$.
	 	So we obtain
	 	\begin{eqnarray} \label{eq212}
	 		A-T^*AT&=& \begin{pmatrix} 
	 			A_0-W^*A_0W & -W^*T_0 \\
	 			-T^*_0W & A_1-T^*_0A_0T_0-T^*_1A_1T_1
	 		\end{pmatrix}\\
	 		&=& \begin{pmatrix}
	 		0 & 0 \\
	 		0 & A_1-T^*_0A_0T_0-T^*_1A_1T_1
	 		\end{pmatrix}, \notag
	 	\end{eqnarray}
	 	taking into consideration that $A-T^*AT \geq 0$ and $(A-T^*AT)|_{\n_0}=0$, which forces $W^*T_0=0$, that is $T^*T \n_0 \subset \n_0$.
	 	This also shows that 
	 	\begin{equation} \label{eq213}
	 		A-T^*T=\begin{pmatrix}
	 			A_0-W^*W & 0 \\
	 			0 & A_1-T^*_0T_0-T^*_1T_1
	 		\end{pmatrix}.
	 	\end{equation}
	 	
	 	On the other hand, since $\n_1=\n_0$ and it is an invariant subspace for $\widehat{T}$ (as we seen before), $\widehat{T}$ has a representation of the form
	 	\begin{equation} \label{eq214}
	 		\widehat{T}=\begin{pmatrix}
	 			V & C_0 \\
	 			0 & C_1	 		\end{pmatrix} \text{ on } \h=\n_0 \oplus(\h \ominus \n_0),
	 	\end{equation}
	 	where $V$ is an isometry and $C_0$, $C_1$ are contractions. 
	 	Furthermore, $V^*C_0=0$ because $\widehat{T}$ is a contraction. 
This subsequently gives
	 	\begin{equation} \label{eq215}
	 		I-\widehat{T}^*\widehat{T}=\begin{pmatrix}
	 			0 & 0 \\
	 			0 & I-C^*_0C_0-C_1^*C_1
	 		\end{pmatrix},
	 	\end{equation}
	 and later we obtain
	 	\begin{equation}
	 		\label{eq216} A-T^*AT=A^{1/2}(I-\widehat{T}^*\widehat{T})A^{1/2}=\begin{pmatrix}
	 			0 & 0 \\
	 			0 & A_1^{1/2}(I-C^*_0C_0-C_1^*C_1)A_1^{1/2}
	 		\end{pmatrix}.
	 	\end{equation}
	 	
	 	Now assume that the statement (i) holds. Then $\n_0=\n(A-T^*T)$, so $W^*W=A_0$ in \eqref{eq213}, while by \eqref{eq212} one has $\h \ominus \n_0=\mathcal{R}(A-T^*AT)$.
	 	Hence \begin{equation} \label{eq217}
	 		(A-T^*AT)|_{\h \ominus \n_0}=A_1^{1/2}[I-A_1^{-1/2}(T^*_0A_0T_0+T^*_1A_1T_1)A^{-1/2}_1]A_1^{1/2}
	 	\end{equation}
	 	is an invertible operator, and the same is true for the operator $A^{-1/2}(A-T^*AT)A^{-1/2}|_{\h \ominus \n_0}.$
	 	
	 	This implies that $C=A^{-1/2}_1(T^*_0A_0T_0+T^*_1A_1T_1)A^{-1/2}_1$ is a strict contraction, meaning that 
	 	$$ 
	 	\| \begin{pmatrix}
A^{1/2}_0T_0A^{-1/2}_1 & A^{1/2}_1T_1A_1^{-1/2}
	 	\end{pmatrix}^{\rm tr} \|<1.
	 	$$ 
	 	Thus (i) implies (ii).
	 	
	 	For the converse implication, assume $||C||<1$ and $W^*W=A_0$.
	 	This later with \eqref{eq213} and the relations $T^*T \leq T^*AT \leq A$ imply $\n_0=\n(A-T^*T)$, so $\h \ominus \n_0=\overline{\mathcal{R}(A-T^*T)}$. But the condition $||C||<1$ ensures that $I-C^*C$ is an invertible operator in $\mathcal{B}(\h \ominus \n_0)$, which by \eqref{eq217} means that $(A-T^*AT)|_{\h \ominus\n_0}$ is invertible, too.
	 	Hence 
	 	$$\mathcal{R}(A-T^*AT)=\h \ominus \n_0=\overline{\mathcal{R}(A-T^*T)}.$$
	 	
	 	Since $A-T^*AT \leq A-T^*T$ (from the relations $T^*T\le T^*AT \le A$), we have $\R(A-T^*AT)\subset \R[(A-T^*T)^{1/2}]$, which together with the previous relation implies that $\R(A-T^*T)$ is closed, and finally that $\mathcal{R}(A-T^*AT)=\mathcal{R}(A-T^*T)$, this meaning the statement (i). Hence (ii) implies (i).
	 	
	 	The last assertion in theorem is derived from Theorem \ref{thm21}, because from (i) one obtains $\mathcal{R}[(A-T^*AT)^{1/2}]=\mathcal{R}[(A-T^*T)^{1/2}]$. 
	 	Also, in this case $W$ from \eqref{eq212} and \eqref{eq213} satisfies the conditions $W^*W=A_0=W^*A_0W=W^{*2}W^2$, whence we conclude that $W$ is a quasi-isometry similar to an isometry.
	 \end{proof}
	 
	 A very special case of Theorem $\ref{thm27}$ is now mentioned.
	 
	 \begin{corollary} \label{c28}
	 	For $T \in \mathcal{B}(\h)$ there exists an invertible operator $A \in \mathcal{B}(\h)$ such that $T^*T \leq T^*AT \leq A$ and $\mathcal{R}(A-T^*AT)=\h=\mathcal{R}(A-T^*T)$, if and only if the spectral radius of $T$ is strictly less than $1$.
	 \end{corollary}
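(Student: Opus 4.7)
The plan is to handle the two implications separately. For the forward direction, observe that $A - T^*AT \geq 0$ has range equal to $\h$, hence is bounded below and invertible. As in Theorem \ref{thm27}, the factorization $A - T^*AT = A^{1/2}(I - \widehat{T}^*\widehat{T})A^{1/2}$, where $\widehat{T}$ is the contraction satisfying $\widehat{T}A^{1/2} = A^{1/2}T$, forces $I - \widehat{T}^*\widehat{T}$ to be invertible, equivalently $\|\widehat{T}\| < 1$. Since $\widehat{T}$ is similar to $T$ via $A^{1/2}$, their spectra coincide, so the spectral radius of $T$ equals that of $\widehat{T}$ and is at most $\|\widehat{T}\| < 1$. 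Alternatively, since $\n_0 = \n(A - T^*AT) = \{0\}$ is trivially $T$-invariant, Theorem \ref{thm27}(ii) applies directly with the first block absent and yields $\|\widehat{T}\| < 1$ at once.

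For the converse, assume the spectral radius $r(T)$ of $T$ is strictly less than $1$, and pick $\lambda$ with $r(T) < \lambda < 1$. By the spectral radius formula $\|T^n\|^{1/n} \to r(T)$, so the positive series
\begin{equation*}
A \; := \; \sum_{n=0}^{\infty} \lambda^{-2n} T^{*n}T^n
\end{equation*}
converges in the operator norm to a positive operator with $A \geq I$; in particular $A$ is invertible. A direct index-shift gives $T^*AT = \lambda^2(A - I)$, whence
\begin{equation*}
A - T^*AT \; = \; (1 - \lambda^2)\, A + \lambda^2 I \; \geq \; \lambda^2 I,
\end{equation*}
which proves $\R(A - T^*AT) = \h$ and, combined with $A \geq I$, also $T^*AT \leq \lambda^2 A \leq A$. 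The inequality $T^*T \leq T^*AT = \lambda^2 A - \lambda^2 I$ reduces to $\lambda^2 A \geq \lambda^2 I + T^*T$, which is immediate from the first two terms of the series for $A$. Finally, $A - T^*T \geq I$ is visible from the same series, so $\R(A - T^*T) = \h$ as well.

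No genuine obstacle arises: the conditions $\R(A - T^*AT) = \R(A - T^*T) = \h$ amount to invertibility of the corresponding positive operators, and, via the similarity $T \sim \widehat{T}$, to the estimate $\|\widehat{T}\| < 1$. The only point requiring mild care is arranging the series defining $A$ so that the single identity $T^*AT = \lambda^2(A - I)$ delivers all four required properties ($A \geq I$, $T^*T \leq T^*AT$, $T^*AT \leq A$, and invertibility of $A - T^*AT$) simultaneously.
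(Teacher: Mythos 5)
Your proof is correct, and while your forward direction essentially mirrors the paper's, your converse takes a genuinely different, more self-contained route. The paper handles the forward implication by noting $\n(A-T^*AT)=\{0\}$, invoking Theorem \ref{thm27} (i)$\Rightarrow$(ii) to get $\|A^{1/2}TA^{-1/2}\|<1$, and then citing Rota's theorem; you instead argue directly that a surjective positive operator is invertible, so $I-\widehat{T}^*\widehat{T}$ is invertible, which for the contraction $\widehat{T}$ gives $\|\widehat{T}\|<1$ and hence $r(T)=r(\widehat{T})<1$ --- same idea, stripped of the matrix machinery (and you correctly note the Theorem \ref{thm27} shortcut as an alternative). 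The real divergence is in the converse: the paper simply cites the nontrivial direction of Rota's theorem to produce $\widehat{T}$ with $\|\widehat{T}\|<1$ and then needs a Douglas range-inclusion argument ($A-T^*AT\le A-T^*T$) to upgrade $\R(A-T^*AT)=\h$ to $\R(A-T^*T)=\h$; you instead construct $A=\sum_{n\ge 0}\lambda^{-2n}T^{*n}T^n$ explicitly (in effect re-proving Rota's theorem inline), and the single identity $T^*AT=\lambda^2(A-I)$ then yields all four conditions by inspection, including the quantitative bounds $A-T^*AT\ge \lambda^2 I$ and $A-T^*T\ge I$, so no Douglas argument is needed. What the paper's route buys is brevity and consistency with its own framework (Theorem \ref{thm27} and the classical literature); what yours buys is a self-contained, constructive proof with explicit lower bounds. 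Two tiny remarks: your verification of $T^*T\le T^*AT$ via the first two series terms is fine but could be replaced by the one-liner $A\ge I\Rightarrow T^*AT\ge T^*T$; and in the forward direction the equivalence ``$I-\widehat{T}^*\widehat{T}$ invertible $\Leftrightarrow\|\widehat{T}\|<1$'' does depend on $\widehat{T}$ being a contraction, which you do have from $T^*AT\le A$, so no gap.
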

	 
	 \begin{proof}
	 	If $T$ and $A$ are as above then $\n_0=\n(A-T^*AT)=\{0\}$, so $T=T_1$ in \eqref{eq211} and $||A^{1/2}TA^{-1/2}||<1$.
	 	Hence $T$ is similar to a strict contraction, which by C. Rota's result means that $r(T)<1$ ($r$ being the spectral radius). 
	 	
	 	Conversely, if $r(T)<1$ then $T$ is similar to a contraction $\widehat{T}$ with $||\widehat{T}||<1$. 
	 	So, there exists an invertible operator $A$ with $T^*T \leq T^*AT \leq A$, such that $A^{1/2}T=\widehat{T}A^{1/2}$.
	 	
	 	Since $A-T^*AT=A^{1/2}(I-\widehat{T}^*\widehat{T})A^{1/2}$ and because $I-\widehat{T}^*\widehat{T}$ is invertible (as $A$), it follows that $\mathcal{R}(A-T^*AT)=\h$. But $A-T^*AT \leq A-T^*T$ which implies 
	 	$$
	 	\mathcal{R}[(A-T^*AT)^{1/2}] \subset \mathcal{R}[(A-T^*T)^{1/2}],
	 	$$ 
	 	and finally one obtains $\h=\mathcal{R}(A-T^*AT)=\mathcal{R}(A-T^*T)$.	
	 	 \end{proof}
	 	 	
	 		\begin{remark}\label{re29}
	 		\rm
	 		Each operator $T$ on $\h$ with $r(T)<1$ (as above) is a $\rho$-contraction in the sense of Sz.-Nagy-C. Foias \cite{SFb} for some constant $\rho >0$. 
	 		This means that $T$ admits a unitary $\rho$-dilation $U$ on a space $\mathcal{M} \supset \h$, that is $T^n=\rho P_{\h}U^n|_{\h}$ ($n \geq 1$), where $U$ is a unitary operator on $\m$. So $T=P_{\h}(\rho U)|_{\h}$ is a compression of the invertible operator $\rho U$, but it is neither quasi-isometric, nor a lifting for $T$ (when $\rho \neq 1$).
	 	 
	 	 	By comparison, the above corollary demonstrates that certain $\rho$-contractions possess natural quasi-isometric liftings, which are similar to isometries. This fact offers a resemblance with the isometric liftings, for the operators $T$ with $r(T)<1$. 

On the other hand, we also have operators $T$ with $r(T)=1$, which admit such liftings, as are the quasi-isometries (from Corollary \ref{c26}), as well as those of the form \eqref{eq211} with $\n_0=\n(A-T^*AT)\neq \{0\}$.	
\end{remark}

	At the end of the next section, we will mention a class of $2$-quasi-isometries for which Theorem \ref{thm27} can be applied.

\medskip

\section{General left invertible quasi-isometric liftings and applications}	
	\medskip

The above results just refer to a class of operators which have left invertible and natural quasi-isometric liftings. But not every operator similar to a contraction admits such a lifting (as we see below). However, the following result holds true.

\begin{theorem}\label{thm31}\quad
	
\begin{itemize}
\item[(i)] Every operator $T \in \mathcal{B}(\h)$ similar to a contraction admits a quasi-isometric lifting $S$ on a space $\ka \supset \h$ which is similar to an isometry, such that $S$ has under a decomposition $\ka=\mathcal{L}\oplus \mathcal{M}$ with $\{0\}\neq \mathcal{L} \subset \ka \ominus \h$, a block matrix of the form 
		\begin{equation}\label{eq31}
			S=\begin{pmatrix}
				V & G \\
				0 & Q
			\end{pmatrix}.
		\end{equation}
		Here $V=S|_{\mathcal{L}}$ is an isometry with $\n(V^*)=\mathcal{R}(G)$, while $Q$ is a quasi-isometric lifting for $T$ on the space $\mathcal{M} \supset \h$, with $\overline{\mathcal{R}(Q)}=\n(G)$ and $\n(Q^*)\subset \n(S^*)$.
	
\item [(ii)] For every left invertible quasi-isometric lifting $S$ for $T$ of the form \eqref{eq31} on $\ka=\mathcal{L} \oplus \mathcal{M}$, where $S \mathcal{L} \subset \mathcal{L} \subset \ka \ominus \h$ and $V=S|_{\mathcal{L}}$ is an isometry with $\n(V^*)=\overline{\mathcal{R}(G)}$, we have that the lifting $Q=P_{\m}S|_{\m}$ for $T$ on $\mathcal{M} \supset \h$ is quasi-isometric if and only if $\mathcal{R}(Q)\subset \n(G).$ \newline
    Additionally, the following conditions
	$$\mathcal{R}(G^*) \subset \n(Q^*)=\n(S^*) \text{ and } \quad S\mathcal{R}(Q)\subset\mathcal{R}(Q) \subset \mathcal{R}(S)$$
	are simultaneously satisfied.\end{itemize}
\end{theorem}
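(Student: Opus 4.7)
The plan is to first construct the lifting in part (i) explicitly, then derive part (ii) by a direct block-matrix computation with the operator $S=\begin{psmallmatrix}V&G\\0&Q\end{psmallmatrix}$.

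\textbf{Part (i).} I would start from any quasi-isometric lifting $Q$ of $T$ on some $\m \supset \h$ supplied by \cite[Theorem 2.1]{SS1}; such a $Q$ need not be left invertible. The idea is to enlarge $Q$ by a forward-shift piece tuned to the defect space $\n(Q^*)$, so that the resulting operator becomes left invertible without losing the quasi-isometric property. Concretely, set $\mathcal{L}=\ell^2_+(\n(Q^*))$, let $V$ be the forward shift on $\mathcal{L}$ (an isometry whose cokernel is the $0$-th copy of $\n(Q^*)$), and let $G\colon\m\to\mathcal{L}$ be the orthogonal projection $P_{\n(Q^*)}$ followed by the embedding $J_0$ of $\n(Q^*)$ into the $0$-th coordinate of $\mathcal{L}$. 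By construction $\overline{\R(G)}=\n(V^*)$, $\n(G)=\overline{\R(Q)}$, $V^*G=0$ and $GQ=0$. Putting $S=\begin{psmallmatrix}V&G\\0&Q\end{psmallmatrix}$ on $\ka=\mathcal{L}\oplus\m$, a block computation reduces both $S^*S$ and $S^{*2}S^2$ to $I_{\mathcal{L}}\oplus(P_{\n(Q^*)}+Q^*Q)$, so $S$ is a quasi-isometry. Left invertibility then reduces to $P_{\n(Q^*)}+Q^*Q$ being bounded below, which I would verify by decomposing $\m=\overline{\R(Q)}\oplus\n(Q^*)$: on $\n(Q^*)$ the operator dominates $I$, and on $\overline{\R(Q)}$ it coincides with $Q^*Q$, which is bounded below because the quasi-isometry $Q$ acts isometrically there. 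The lifting property, the invariance $S\mathcal{L}\subset\mathcal{L}\subset\ka\ominus\h$, and the inclusion $\n(Q^*)\subset\n(S^*)$ are immediate from the block form.

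\textbf{Part (ii).} For $S$ of the prescribed form under the stated hypotheses, the same block algebra gives
\[
S^{*2}S^2-S^*S = 0 \oplus \bigl((GQ)^*(GQ)+Q^{*2}Q^2-Q^*Q\bigr).
\]
Since $S$ is assumed quasi-isometric, the displayed bracket vanishes, so $Q$ is quasi-isometric (i.e.\ $Q^{*2}Q^2=Q^*Q$) if and only if $(GQ)^*(GQ)=0$, i.e.\ $GQ=0$, i.e.\ $\R(Q)\subset\n(G)$. For the additional conditions, $\R(G^*)\subset\n(Q^*)$ is the adjoint rewriting of $GQ=0$, and $S\R(Q)\subset\R(Q)$ follows at once from $S(0,m)=(Gm,Qm)=(0,Qm)$ whenever $m\in\R(Q)\subset\n(G)$. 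For the identities $\n(Q^*)=\n(S^*)$ and $\R(Q)\subset\R(S)$, I would analyse $(l,m)\in\n(S^*)$: the equations $V^*l=0$ and $G^*l+Q^*m=0$ combined with $\n(V^*)=\overline{\R(G)}$ and the orthogonality $\R(G^*)\perp\R(Q)$ (from $GQ=0$) should force $l=0$ and $m\in\n(Q^*)$; dually, for $\R(Q)\subset\R(S)$ I would solve $Vl+Gm'=0$, $Qm'=Qm$ using that $\R(G)\subset\n(V^*)=\R(V)^{\perp}$ forces $Gm'=0$, so that a preimage of $Qm$ is to be sought inside $\n(G)\supset\overline{\R(Q)}$.

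The main obstacle I anticipate is this final step: establishing $\n(Q^*)=\n(S^*)$ and $\R(Q)\subset\R(S)$ requires exploiting the hypotheses $\n(V^*)=\overline{\R(G)}$ and left invertibility of $S$ simultaneously, to rule out spurious elements of $\n(S^*)$ lying off $\m$ and to guarantee preimages of $Qm$ inside $\n(G)$. This interplay between the cokernel structure of $V$ and the range structure of $Q$ seems the most delicate part of the argument, and will probably use that the isometric action of $Q$ on $\overline{\R(Q)}$ lets one replace an arbitrary preimage of $Qm$ by one inside $\n(G)$.
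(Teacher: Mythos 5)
Your part (i) is correct in substance and actually takes a simpler route than the paper, but one step is justified invalidly. The paper obtains $S$ by applying Corollary \ref{c26} to the quasi-isometry $Q$; unwinding that corollary (through Theorem \ref{thm25}), the paper's $G$ is $J_1D$ with $D$ an \emph{invertible operator chosen large}, $D^*D\ge G_1^*G_1+\frac12 I$, precisely so that the Foias--Frazho lemma yields $S^*S\ge\frac12 I$. You instead take $D=I$ (i.e.\ $G=J_0P_{\n(Q^*)}$), which does work here because $Q$ is already a quasi-isometry; however, your argument for left invertibility --- bounded below on $\n(Q^*)$ and on $\overline{\R(Q)}$ separately, hence bounded below --- is not a valid inference: a positive operator bounded below on each of two complementary orthogonal subspaces need not be bounded below on their sum (e.g.\ $\begin{psmallmatrix}1&1\\1&1\end{psmallmatrix}$ is positive, bounded below on both coordinate axes, and annihilates $(1,-1)$). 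Your operator $P_{\n(Q^*)}+Q^*Q$ has the nonzero off-diagonal entry $V_1^*G_1$ (where $V_1=Q|_{\overline{\R(Q)}}$, $G_1=Q|_{\n(Q^*)}$), so the cross terms must be controlled. The repair is easy: $((P_{\n(Q^*)}+Q^*Q)h,h)=\|h_1\|^2+\|V_1h_0+G_1h_1\|^2$ for $h=h_0\oplus h_1$, and since $V_1$ is isometric a two-case estimate (according to whether $\|h_1\|$ is small compared with $\|h\|/\|G_1\|$ or not) gives a uniform lower bound.

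The genuine gap is in the second half of part (ii). You read ``simultaneously satisfied'' as asserting that, once $Q$ is quasi-isometric, both $\R(G^*)\subset\n(Q^*)=\n(S^*)$ and $S\R(Q)\subset\R(Q)\subset\R(S)$ hold outright, and you sketch a proof of that. This is false, and it is not what the theorem claims: the intended (and proved) meaning is that the two conditions are \emph{equivalent to each other}. Counterexample to your reading: take $\h=\m=\ell^2_+(\C)$ with basis $(\delta_n)$, $T=Q=S_+$ the unilateral shift, $\mathcal{L}=\ell^2_+(\C)$ with basis $(e_n)$ and $V$ its shift, and $Gx=x_0e_0$. All hypotheses of (ii) hold, $GQ=0$ and $S^*S\ge I$; yet $\n(S^*)=\{ce_0\oplus(m_0\delta_0-c\delta_1):c,m_0\in\C\}$ strictly contains $\n(Q^*)=\C\delta_0$, and $Q\delta_0=\delta_1$ is not orthogonal to $\n(S^*)$, so $\R(Q)\not\subset\R(S)$; thus both compound conditions fail --- simultaneously, exactly as the theorem predicts (the paper itself notes, just before Proposition \ref{p342}, that the inclusion $\n(Q^*)\subset\n(S^*)$ is strict in general). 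The precise point where your sketch breaks: in analysing $G^*l+Q^*m=0$ you invoke $\R(G^*)\perp\R(Q)$, but the term $Q^*m$ lies in $\R(Q^*)$, not in $\R(Q)$; all that $GQ=0$ gives is $\R(G^*)\subset\n(Q^*)$, hence only $Q^*m\in\n(Q^*)$, i.e.\ $m\in\n(Q^{*2})$, and for a quasi-isometry $\n(Q^{*2})$ can be strictly larger than $\n(Q^*)$ (the shift again). What has to be proved instead is the two-way implication: assuming $\n(S^*)=\n(Q^*)\supset\R(G^*)$, left invertibility gives $\R(S)=\ka\ominus\n(Q^*)=\mathcal{L}\oplus\overline{\R(Q)}$, from which both range inclusions follow (using $Gk=0$ for $k\in\overline{\R(Q)}$); conversely, assuming $S\R(Q)\subset\R(Q)\subset\R(S)$, one gets $\n(S^*)\subset\R(Q)^\perp=\mathcal{L}\oplus\n(Q^*)$, and the hypothesis $\n(V^*)=\overline{\R(G)}$ (equivalently $\n(G^*)=\R(V)$) forces the $\mathcal{L}$-component of any vector of $\n(S^*)$ to vanish, so $\n(S^*)=\n(Q^*)$, while $G\R(Q)=P_{\mathcal{L}}S\R(Q)=\{0\}$ gives $\R(G^*)\subset\n(Q^*)$.
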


\begin{proof}
	Let $T \in \mathcal{B}(\h)$ be similar to a contraction, and $Q$ on $\mathcal{M} \supset \h$ be a quasi-isometric lifting for $T$.
	Then $Q$ and $Q^*Q$ have on $\mathcal{M}=\overline{\mathcal{R}(Q)}\oplus \n(Q^*)$ the matrix representations
	\begin{equation}
		\label{eq32}
		Q=\begin{pmatrix}
			V_1 & G_1 \\
			0 & 0
		\end{pmatrix}\text{, } \quad Q^*Q=\begin{pmatrix}
		I & V^*_1G_1 \\
		G^*_1V_1 & G^*_1G_1
		\end{pmatrix}\text{, }
	\end{equation}
	where $V_1$ is an isometry.
Supposing that $T$ is not a contraction (this being a trivial case in our context), we have $G_1 \neq 0$.

Now, by Corollary \ref{c26}, $Q$ possesses a left invertible quasi-isometric lifting $S$ on a space $\ka=\mathcal{L} \oplus \mathcal{M}$, where $$\mathcal{L}=\ell^2_+(\n(Q^*)) \subset \n(S^*S-I).$$

Then, under the decompositions
$$\ka=\mathcal{L}\oplus \overline{\mathcal{R}(Q)} \oplus \n(Q^*)=(\ka \ominus \h)\oplus \h=\mathcal{L}\oplus \mathcal{M},$$
$S$ has, respectively, the matrix representations 
\begin{equation}
	\label{eq33}
	S=\begin{pmatrix}
		V&0 & G_0 \\
		0 & V_1 & G_1 \\
		0 & 0 &0  
	\end{pmatrix}=\begin{pmatrix}
	W & E \\
	0 & T
	\end{pmatrix}=\begin{pmatrix}
	V & G \\
	0 & Q
	\end{pmatrix}\text{,}
\end{equation}
with the corresponding appropriate entries. 
More precisely, $V$ is the foward shift on $\mathcal{L}$, while $G_0:\n(Q^*) \to \mathcal{L}$ is a left invertible operator with $\mathcal{R}(G_0)=\mathcal{R}(G)=\n(V^*)$, where $G= \begin{pmatrix}
	0 & G_0
\end{pmatrix}$ acts as an operator from $\mathcal{M}=\overline{\R(Q)} \oplus \n(Q^*)$ into $\mathcal{L}$, while $E:\h \to \ka \ominus \h$. 

To compare with the construction of $Q$ in \eqref{eq210} we have in \eqref{eq33} that $V_1$ is an isometry, so $D_{V_1}=0$ (instead of $C$, respectively $D_C$) in \eqref{eq210}.

Now from \eqref{eq33} we obtain that $\overline{\mathcal{R}(Q)}=\n(G)$, taking into account that $G_0$ is injective. In addition to this, we have that $\n(Q^*) \subset \n(S^*)$, because $Q^*=S^*|_{\m}$, which completes the proof of statement (i). 

For the assertion (ii), we consider $S$ to be a left invertible quasi-isometric lifting for $T$, having a block matrix in $\ka=\mathcal{L}\oplus \mathcal{M}$ as in \eqref{eq31}, where $\{0\}\neq \mathcal{L}=\ka \ominus \mathcal{H}$, such that $S\mathcal{L} \subset \mathcal{L}$ and $V=S|_\mathcal{L}$ is an isometry with $\n(V^*)=\overline{\mathcal{R}(G)}$, $G=P_\mathcal{L}S|_\mathcal{M}$.

Clearly, this ensures that $Q$ is a quasicontractive lifting for $T$ on the space $\mathcal{M} \supset \h$, taking into account that $S$ is a lifting for $Q$ (see for instance, \cite[Theorem 2.3]{SS1}). Furthermore, the relation $S^*S|_{\m}=S^{*2}S^2|_{\m}$ obtained through the last representation of $S$ in \eqref{eq33}, where $V^*G=0$, means in the terms of $G,Q$ that 
$$G^*G+Q^*Q=G^*G+Q^*(G^*G+Q^*Q)Q,$$
that is $Q^*Q=Q^*G^*GQ+Q^{*2}Q^2$.

Hence $Q$ is a quasi-isometry if and only if $GQ=0$, meaning that $\mathcal{R}(Q) \subset \n(G)$.
This gives the first assertion in (ii).

For the second statement in (ii), we suppose that
 $\n(S^*)=\n(Q^*) \supset \mathcal{R}(G^*)$. 
Then $\mathcal{R}(S)=\mathcal{L}\oplus \overline{\mathcal{R}(Q)}$, while for $k \in\overline{\mathcal{R}(Q)}$ and $\ell \in \mathcal{L}$ we have (using \eqref{eq31})
$$(Sk,\ell)=(k, S^*\ell)=(k,V^*\ell \oplus G^*\ell)=(k, G^*\ell)=(Gk, \ell)=0,$$
since $\mathcal{R}(Q) \subset \n(G)$ by our prior assumption.
So $Sk$ is orthogonal on $\mathcal{L}$ for any $k\in \overline{\R(Q)}$, consequently $S\overline{\mathcal{R}(Q)} \subset \overline{\mathcal{R}(Q)} \subset \mathcal{R}(S)$.

Conversely, assume that these last inclusions hold.
Then $\n(S^*) \subset \mathcal{L} \oplus \n(Q^*)$ and for $k=k_0\oplus k_1 \in \n(S^*)$ with $k_0 \in \mathcal{L}$ and $k_1\in \n(Q^*)$, we have (by \eqref{eq31}) that $k_0 \in \n(V^*)\cap \n(G^*)=\{0\}$, having in view that $\n(G^*)=\mathcal{R}(V)$ by our assumption in (ii).
So $k=k_1 \in \n(Q^*)$, which concludes that $\n(S^*)=\n(Q^*)$.

In addition, since $S\mathcal{R}(Q) \subset \overline{\mathcal{R}(Q)}$ we have that $G\mathcal{R}(Q)=P_\mathcal{L}S\mathcal{R}(Q)=\{0\}$, according to the representation in \eqref{eq31}.
So $\mathcal{R}(Q) \subset \n(G)$, which finally shows that $\mathcal{R}(G^*) \subset \n(Q^*)=\n(S^*)$. All statements are now proved.
\end{proof}

\begin{remark}\label{re32}
	\rm
	The lifting $S$ from Theorem \ref{thm31} (i) with the representation \eqref{eq31}, is not necessary a minimal lifting for $T$, but it is a minimal lifting for $Q$. 
	For the later assertion, we recall from the construction of $S$, that $V$ is the foward shift on $\mathcal{L}$ with $\n(V^*)=\mathcal{R}(G)$.
	
	However, since the condition $S^*S \geq c I$, for a constant $c>0$, is preserved for the restriction of $S$ to each invariant subspace for $S$, we can conclude the following result.
\end{remark}

\begin{corollary}
	\label{c33} Every operator similar to a contraction admits a minimal quasi-isometric lifting, which is similar to an isometry.
\end{corollary}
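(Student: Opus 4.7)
The plan is to take the lifting $S$ produced by Theorem~\ref{thm31}(i) and restrict it to the smallest $S$-invariant closed subspace that contains $\h$. Concretely, set $\ka_0:=\bigvee_{n\ge 0}S^n\h$ and $S_0:=S|_{\ka_0}\in\mathcal{B}(\ka_0)$. Since $\h\subset\ka_0\subset\ka$ and $\ka_0$ is invariant for $S$, the restricted operator $S_0$ is a well-defined lifting for $T$ (use $P_{\ka_0,\h}=P_{\ka,\h}|_{\ka_0}$ together with $P_{\ka,\h}S=TP_{\ka,\h}$), and minimality holds by the very definition of $\ka_0$.

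Next I would verify that the quasi-isometric property descends to $S_0$. For every $x\in\ka_0$ we have $Sx=S_0x$ and $S^2x=S_0^2x$, both sitting inside $\ka_0$, so
\[
\|S_0 x\|^2=\|Sx\|^2=\|S^2x\|^2=\|S_0^2 x\|^2,
\]
using that $S$ is a quasi-isometry. Since $S_0^*S_0$ and $S_0^{*2}S_0^2$ are self-adjoint and agree in quadratic form, we get $S_0^*S_0=S_0^{*2}S_0^2$, i.e., $S_0$ is a quasi-isometry.

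The final ingredient is similarity to an isometry, which (by the observation recorded after Theorem~\ref{thm25}) is equivalent, for a quasi-isometry, to being left invertible. This is exactly the point flagged in Remark~\ref{re32}: the inequality $S^*S\ge cI_\ka$ with $c>0$, which encodes the left invertibility of $S$, is inherited by any $S$-invariant subspace. Indeed, for $x\in\ka_0$,
\[
(S_0^*S_0 x,x)=\|S_0 x\|^2=\|Sx\|^2=(S^*Sx,x)\ge c\|x\|^2,
\]
so $S_0^*S_0\ge c I_{\ka_0}$. Hence $S_0$ is left invertible, and therefore similar to an isometry.

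The three properties combined yield that $S_0$ is a minimal quasi-isometric lifting for $T$ similar to an isometry, which is the desired conclusion. There is no real obstacle here, only three short preservation-under-restriction verifications; the whole proof amounts to invoking Theorem~\ref{thm31}(i) and then cutting down to $\ka_0$, which is why the author states the result as a corollary.
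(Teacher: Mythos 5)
Your proof is correct and follows essentially the same route as the paper: the paper's Remark~\ref{re32} explicitly records that the left invertibility condition $S^*S\ge cI$ passes to restrictions on invariant subspaces, and the corollary is obtained, exactly as you do, by cutting the lifting of Theorem~\ref{thm31}(i) down to $\ka_0=\bigvee_{n\ge 0}S^n\h$ (the same restriction device already used at the end of the proof of Theorem~\ref{thm21}), with the left invertible quasi-isometry then being similar to an isometry by the observation following Theorem~\ref{thm25}.
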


	Notice that for such a minimal lifting, the relationship between entries of the block matrix from \eqref{eq31} are not preserved, in general.
	
	Concerning the statement (ii) in Theorem \ref{thm31}, it is clear that $S$ can be always considered a minimal lifting for $Q$, which does not affect the equivalences mentioned here.
	
	Returning to the statement (i), we remark from the second representation of $S$ in \eqref{eq33} that $W^*E \neq 0$, i.e. $S^*S \h \not \subset \h$ in general, even if $Q$ is a minimal lifting for $T$. On the other hand, if $Q$ satisfies the kernel condition $Q^*Q\n(Q^*) \subset \n(Q^*)$, then $S^*S \n(Q^*)\subset \n(Q^*)$ by the representation \eqref{eq33} of $S$. But this does not mean the kernel condition for $S$, having in view that the inclusion $\n(Q^*)\subset \n(S^*)$ is strict, in general. More precisely, we can even determine the subspace $\n(S^*)\ominus \n(Q^*)$ under the kernel condition of a quasi-isometry $Q$ (not necessary as a lifting of an operator $T$). 
	
	\begin{proposition}\label{p342}
	Let $Q\in \mathcal{B}(\m)$ be a non-contractive quasi-isometry with the kernel condition $Q^*Q \n(Q^*) \subset \n(Q^*)$ and $S$ on $\ka =\mathcal{L}\oplus \overline{\R(Q)}\oplus \n(Q^*)$ be the left invertible quasi-isometric lifting for $Q$, with the block matrix \eqref{eq33} and its entries $V,V_1,G_0,G_1$. Then
	\begin{eqnarray}\label{eq342}
	\n(S^*)\ominus \n(Q^*) &=& \{l\oplus m:l\in \n(V^*),\quad m\in \n(V_1^*),\quad G_0^*l+G_1^*m=0\}\\
	&=& \n \left[\begin{pmatrix} G_0^* & G_1^* \end{pmatrix}\right] \ominus (\R(V)\oplus \n(G_1^*)). \notag
	\end{eqnarray}
	\end{proposition}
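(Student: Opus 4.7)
The approach is a direct block-matrix computation of $\n(S^*)$ followed by a coordinate-wise orthogonal complement computation, with the kernel condition on $Q$ entering at exactly one step.

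First I would transpose the block representation of $S$ in \eqref{eq33}. The operator $S^*$ is block lower-triangular on $\mathcal{L}\oplus\overline{\R(Q)}\oplus \n(Q^*)$ with diagonal blocks $V^*,V_1^*,0$ and with $G_0^*, G_1^*$ as entries in the bottom row, while the third column vanishes identically. Hence for $k=l\oplus m\oplus n$,
$$S^*k = V^*l\,\oplus\, V_1^*m\,\oplus\, (G_0^*l+G_1^*m),$$
and the coordinate $n$ is unconstrained. Therefore
$$\n(S^*)=\{\,l\oplus m\oplus n : l\in \n(V^*),\ m\in \n(V_1^*),\ G_0^*l+G_1^*m=0\,\}.$$
Since $\n(Q^*)$ embeds into $\ka$ as $0\oplus 0 \oplus \n(Q^*)$ and is already contained in $\n(S^*)$, subtracting it in the orthogonal-sum sense forces $n=0$ and produces the first equality in \eqref{eq342}.

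For the second equality I would unpack $\n\begin{pmatrix}G_0^* & G_1^*\end{pmatrix}\ominus(\R(V)\oplus \n(G_1^*))$ coordinate-wise: an element $l\oplus m$ lies in the complement precisely when $l\perp \R(V)$, i.e.\ $l\in \n(V^*)$, and $m\perp \n(G_1^*)$, i.e.\ $m\in \overline{\R(G_1)}$. Thus the second equality reduces entirely to the identification $\overline{\R(G_1)}=\n(V_1^*)$.

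This identification is the main substantive point. The inclusion $\R(G_1)\subset \n(V_1^*)$ is immediate from \eqref{eq32}: the kernel condition $Q^*Q\n(Q^*)\subset \n(Q^*)$ is equivalent to the vanishing of the upper-right block $V_1^*G_1$ of $Q^*Q$. For the reverse inclusion, $V_1$ is an isometry on $\overline{\R(Q)}$, so $\R(V_1)$ is closed and $\overline{\R(Q)}=\R(V_1)\oplus \n(V_1^*)$. Because $\overline{\R(G_1)}\subset \n(V_1^*)$ is orthogonal to $\R(V_1)$, the sum $\R(V_1)+\overline{\R(G_1)}$ is a sum of two orthogonal closed subspaces, hence itself closed; it contains the dense subset $\R(V_1)+\R(G_1)=\R(Q)$ of $\overline{\R(Q)}$, and therefore must equal $\overline{\R(Q)}$. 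Comparing with $\overline{\R(Q)}=\R(V_1)\oplus \n(V_1^*)$ yields $\overline{\R(G_1)}=\n(V_1^*)$, which completes the proof. The one delicate point — where the kernel-condition hypothesis is genuinely needed — is the closedness of $\R(V_1)+\overline{\R(G_1)}$, which rests precisely on the inclusion $\overline{\R(G_1)}\subset \n(V_1^*)$ provided by that hypothesis.
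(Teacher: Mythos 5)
Your proof is correct and follows essentially the same route as the paper: both arguments read $\n(S^*)$ off the block matrix \eqref{eq33}, reduce the second equality to the identification $\n(V_1^*)=\overline{\R(G_1)}$, and use the kernel condition through the vanishing of $V_1^*G_1$. The only difference is that the paper merely asserts that the kernel condition "ensures" $\n(V_1^*)=\overline{\R(G_1)}$, whereas you actually prove the nontrivial inclusion $\n(V_1^*)\subset\overline{\R(G_1)}$ via the closedness of the orthogonal sum $\R(V_1)+\overline{\R(G_1)}$ and the density of $\R(Q)=\R(V_1)+\R(G_1)$ in $\overline{\R(Q)}$ --- a worthwhile detail that the paper leaves to the reader.
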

	
\begin{proof}
We know that $\{0\}\neq \n(Q^*)\subset \n(S^*)$ because $Q^*=S|_{\m}$. But the kernel condition $Q^*Q\n(Q^*)\subset \n(Q^*)$ ensures that $\n(V_1^*)=\overline{\R(G_1)}$, so $\R(V_1)=\n(G_1^*)$, where $V_1=Q|_{\overline{\R(Q)}}$ is an isometry and $G_1=Q|_{\n(Q^*)}$. 

On the other hand, we have $\{0\}\neq \R \left[\begin{pmatrix} G_0& G_1 \end{pmatrix}^{\rm tr} \right] \subset \R(V)\oplus \n(G_1^*)$, where $V=S|_{\mathcal{L}}$ is an isometry with $\n(V^*)=\R(G_0)$, while $G_0:\n(Q^*)\to \mathcal{L}$ is a left invertible operator. 

Now, it is clear that $\n(S^*)\ominus \n(Q^*) \subset \mathcal{L}\oplus \overline{\R(Q)}$, and the former expression of $\n(S^*)\ominus \n(Q^*)$ immediately follows from the representation \eqref{eq33} of $S$. Thus, if $k=l\oplus m$ with $l\in \n(V^*)$ and $m\in \n(V_1^*)=\overline{\R(G_1)}$ such that $G_0^*l+G_1^*m=0$, then $k\in  \R \left[\begin{pmatrix} G_0^* & G_1^* \end{pmatrix} \right]$ and $l\perp \R(V)$, $m\perp \n(G_1^*)$, hence $k$ is orthogonal on $\R(V)\oplus \n(G_1^*)$. This gives the inclusion
$$
\n(S^*)\ominus \n(Q^*)\subset \n \left[\begin{pmatrix} G_0^* & G_1^* \end{pmatrix} \right] \ominus (\R(V)\oplus \n(G_1^*)).
$$

Conversely, let $k=l\oplus m \in \n \left[\begin{pmatrix} G_0^* & G_1^* \end{pmatrix} \right] $ and $k\perp \R(V)\oplus \n(G_1^*)$. In particular, $k\perp VV^*l\oplus m_1$ where $m_1=P_{\n(G_1^*)}m$, that is
$$
0=(l\oplus m,VV^*l\oplus m_1)=\|V^*l\|^2+\|m_1\|^2.
$$
It follows that $V^*l=0$, i.e., $l\in \n(V^*)$ and $m_1=0$, hence $m\in \overline{\R(G_1)}=\n(V_1^*)$. But this means that $k\in \n(S^*)\ominus \n(Q^*)$, and so we get the inclusion 
$$
\n \left[\begin{pmatrix} G_0^* & G_1^* \end{pmatrix} \right] \ominus (\R(V)\oplus \n(G_1^*)) \subset \n(S^*)\ominus \n(Q^*),
$$ 
and finally the second relation in \eqref{eq342}.
\end{proof}	
	
	The above observations indicate the existence of operators similar to contractions that cannot admit quasi-isometric liftings which are simultaneously left invertible and natural. This limitation holds even for a particular class of operators, as illustrated in the following example.
	
	\begin{example}\label{ex34}
		Let $T\in \mathcal{B}(\h)$ be a non-contractive operator similar to a contractive symmetry $J \in \mathcal{B}(\h)$. The latter means that $J^2=I$ and $\|J\|\le 1$, which forces $J$ to be unitary.
		
	Therefore exists a positive invertible operator $A_0 \in \mathcal{B}(\h)$ such that $A^{1/2}_0T=JA_0^{1/2}$. Then $A^{1/2}_0T^2=JA_0^{1/2}T=J^2A^{1/2}_0=A_0^{1/2}$, which implies $T^2=I$.
		
		We use Theorem \ref{thm21} to show that $T$ has not a left invertible and natural quasi-isometric lifting. 
			More precisely, we see that there no invertible operator $A\in \mathcal{B}(\h)$ which satisfies the conditions \eqref{eq21} relative to $T$.
		
		Indeed, let $A$ be any invertible operator in $\mathcal{B}(\h)$ such that $T^*T \leq A$ and $T^*AT \leq A$.
		So there is a contraction $C$ on $\h$ with $CA^{1/2}=A^{1/2}T$.
		Then $C^2A^{1/2}=A^{1/2}T^2=A^{1/2}$ which implies $C^2=I$.
		Since $||C|| \leq 1$ it follows that $C$ is unitary, which later yields that $T^*AT=A$.
			Thus, we get on one hand $\mathcal{R}(A-T^*AT)=\{0\}$, i.e., $\n(A-T^*AT)=\h$.
		
		Assume now that  $\n(A-T^*T)=\h$, which means $T^*T=A$.
		Then from this relation and the one obtained before, we infer that
		$$T^*T=A=T^*AT=T^{*2}T^2=I$$
		because $T^2=I$.
		But this contradicts our assumption that $T$ is not a contaction.
		
		We conclude that  $\n(A-T^*T) \not \subset \h$, or equivalently $$\mathcal{R}[(A-T^*T)^{1/2}]\not = \{0\}=\mathcal{R}[(A-T^*AT)^{1/2}],$$
		and this happens for any operator $A$ as has been chosen above. 
		
		In other words, $T$ does not satisfy the conditions \eqref{eq21}, which proves that $T$ does not admit a left invertible and natural quasi-isometric lifting.
\end{example}
		
		A concrete example of an operator $T$ as described above, is now presented.
		
\begin{example}\label{ex35}
		
		On the space $\widetilde{\h}=\h \oplus \h$ we consider the operators $J$, $A_0$ and $T$, having the block matrices
		$$J=\begin{pmatrix}
			0 & J_0 \\
			J^*_0 & 0 
		\end{pmatrix}\text{, }
		A_0=\begin{pmatrix}
			\frac{1}{4}I & 0 \\
			0 & I
		\end{pmatrix}\text{, } T=A_0^{-1/2}JA_0^{1/2}=\begin{pmatrix}
		0 & 2J_0 \\
		\frac{1}{2}J_0^* & 0
		\end{pmatrix}\text{, }$$
		where $J_0:\{0\}\oplus \h \to \h \oplus \{0\}$ is the natural embedding.
		
		Clearly, $J^2=I$ with $||J||=1$ and $A^{1/2}_0T=JA_0^{1/2}$, therefore $T$ is a non-contractive operator which is similar with the unitary operator $J$, by the invertible operator $A_0$ into $\mathcal{B}(\widetilde{\h})$. But by the arguments from the previous example, for any invertible operator $A\in \mathcal{B}(\widetilde{\h})$ with $T^*T\le A$ and $T^*AT\le A$, together $T$ and $A$ cannot satisfy the range condition from \eqref{eq21}. Hence for every left invertible quasi-isometric lifting $S$ for $T$ (which is assured by Theorem \ref{thm31}), one has $S^*S\h \not\subset \h$. 
	\end{example}
	
	Next we present another result in relationship to Theorem \ref{thm27} and Theorem \ref{thm31}, which refers to some classes of operators with left invertible quasi-isometric liftings, but not necessarily natural liftings in this context.
	
	 	 \begin{proposition}
	 	 	\label{p36}
	 	 	Let $A$, $T$, $\widehat{T} \in \mathcal{B}(\h)$ be as in the hypothesis of Theorem \ref{thm27}, such that the subspace $\n_0=\n(A-T^*AT)$ is invariant for $T$. Then 
	 	 	\begin{enumerate}
	 	 		\item [(i)] $\mathcal{R}(A-T^*AT)$ is closed if and only if $||\widehat{T}|_{\h \ominus \n_0}||<1$.
	 	 		\item [(ii)] If $\n_0=\n(A-T^*T)$, then $\mathcal{R}(A-T^*T)$ is closed if and only if $||TA^{-1/2}|_{\h \ominus \n_0}||<1$.
	 	 	\end{enumerate}
	 	 \end{proposition}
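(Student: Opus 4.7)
The plan is to treat both parts simultaneously by exploiting the block decomposition of $\h = \n_0 \oplus (\h \ominus \n_0)$ supplied by Theorem~\ref{thm27}. Since $T\n_0 \subset \n_0$, that theorem yields $\n_0 = \n_1 = \n(I - \widehat{T}^*\widehat{T})$ and shows that $\n_0$ is invariant also for $A$ and $\widehat{T}$; consequently $A = A_0 \oplus A_1$, with $A_0, A_1$ positive and invertible on their respective summands, and $\widehat{T}$ admits the block form from \eqref{eq214}, with $V$ an isometry on $\n_0$, $C_0, C_1$ contractions, and $V^*C_0 = 0$.

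For part (i), I would begin with the identity $A - T^*AT = A^{1/2}(I - \widehat{T}^*\widehat{T})A^{1/2}$ from \eqref{eq216}, which by \eqref{eq215} gives
\[
A - T^*AT = 0_{\n_0} \oplus \bigl( A_1^{1/2} B \, A_1^{1/2} \bigr), \qquad B := I - C_0^*C_0 - C_1^*C_1.
\]
The identity $\n(I - \widehat{T}^*\widehat{T}) = \n_0$ forces $\n(B) = \{0\}$ on $\h \ominus \n_0$. Since $A_1^{1/2}$ is invertible on $\h \ominus \n_0$, closedness of $\R(A - T^*AT)$ is equivalent to closedness of $\R(B)$; for a positive and injective $B$ this is in turn equivalent to $B$ being bounded below, i.e., $\|C_0^*C_0 + C_1^*C_1\| < 1$, which coincides with $\|\widehat{T}|_{\h \ominus \n_0}\|^2 < 1$.

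For part (ii), the strategy is to repeat the argument with $R := T A^{-1/2}$ playing the role of $\widehat{T}$. From $T^*T \le A$ we get $I - R^*R = A^{-1/2}(A - T^*T)A^{-1/2} \geq 0$, so $R$ is a contraction; since $A$, and hence $A^{-1/2}$, preserves $\n_0$, we have $R\n_0 = T\n_0 \subset \n_0$, so $R$ has an upper-triangular block form $\begin{pmatrix} V' & C'_0 \\ 0 & C'_1 \end{pmatrix}$ on $\n_0 \oplus (\h \ominus \n_0)$. The hypothesis $\n_0 = \n(A - T^*T)$ gives $\n(I - R^*R) = A^{1/2}\n(A - T^*T) = \n_0$, which forces $V'^*V' = I_{\n_0}$, $V'^*C'_0 = 0$, and $\n(B') = \{0\}$ for $B' := I - C_0'^*C_0' - C_1'^*C_1'$. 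Then $A - T^*T = A^{1/2}(I - R^*R)A^{1/2} = 0_{\n_0} \oplus (A_1^{1/2} B' A_1^{1/2})$, and arguing exactly as in (i) finishes the proof, since $\|R|_{\h \ominus \n_0}\| = \|TA^{-1/2}|_{\h \ominus \n_0}\|$.

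The only step that needs genuine checking is the derivation of the block structure for $R$ in part (ii): showing that $V'$ is an isometry and that $\n(B') = \{0\}$ both follow from $\n(I - R^*R) = \n_0$ combined with $R\n_0 \subset \n_0$, using the same mechanism that gives the shape of $\widehat{T}$ in Theorem~\ref{thm27}. The passage from closed range of the positive injective operator $B$ (or $B'$) to its invertibility is a standard spectral-theoretic fact and adds no serious obstacle.
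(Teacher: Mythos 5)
Your proof is correct and follows essentially the same route as the paper's: both arguments rest on the factorizations $A-T^*AT=A^{1/2}(I-\widehat{T}^*\widehat{T})A^{1/2}$ and $A-T^*T=A^{1/2}\bigl(I-(TA^{-1/2})^*(TA^{-1/2})\bigr)A^{1/2}$, the block-diagonal structure over $\h=\n_0\oplus(\h\ominus\n_0)$ (valid because $\n_0$ reduces the positive operator $A$), and the equivalence, for a positive injective operator, between closed range and being bounded below. The only difference is one of detail: in (ii) you explicitly re-derive the triangular block form of $R=TA^{-1/2}$ and the vanishing of its off-diagonal coupling, whereas the paper reads the norm condition $\|TA^{-1/2}|_{\h\ominus\n_0}\|<1$ directly off the factorization.
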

	 	 
	 \begin{proof}
	 	Since $T \n_0 \subset \n_0$ we have also $\widehat{T} \n_0 \subset \n_0$, so $T$ and $\widehat{T}$ have the representation \eqref{eq211} and \eqref{eq214}, respectively, on $\h=\n_0 \oplus \overline{\mathcal{R}(A-T^*AT)}$.
	 	
	 	To show (i) assume that $\mathcal{R}(A-T^*AT)$ is closed. 
	 	Since $\n_0=\n(I-\widehat{T}^*\widehat{T})$, by Theorem \ref{thm27}, it follows from the representation \eqref{eq215} that $I-C^*_0C_0-C_1^*C_1$ is invertible in $\mathcal{B}(\h \ominus\n_0)$, where $\h\ominus \n_0=\R(A-T^*AT)$.	 	
	 	But this implies that $||C^*_0C_0+C^*_1C_1||<1$, which means $||\widehat{T}|_{\h \ominus \n_0}||<1$.
	 	
	 	Conversely, the last condition ensures that $I-C^*_0C_0-C_1^*C_1$, as well as $(A-T^*AT)|_{\h \ominus \n_0}$ in \eqref{eq217}, are invertible operators in $\mathcal{B}(\h\ominus \n_0)$. Hence
	 	$$\overline{\mathcal{R}(A-T^*AT)}=\h\ominus\n_0=(A-T^*AT)(\h \oplus\n_0)=\mathcal{R}(A-T^*AT).$$
	 	 Thus the equivalence in (i) is proved.
	 	
	 	For (ii) we proceed similarly, assuming that $\n_0=\n(A-T^*T)$.	 	
	 	Thus, we have from hypothesis that $A^{1/2}T=\widehat{T}A^{1/2}$, whence $A-T^*T=A^{1/2}(I-A^{-1/2}T^*TA^{-1/2})A^{1/2}$.
	 	
	 	Now, if $\mathcal{R}(A-T^*T)$ is closed then $\h \ominus \n_0=\mathcal{R}(A-T^*T)$ and $(A-T^*T)|_{\h \ominus\n_0}$ is invertible.
	 	 Thus, from the previous relation we obtain that $||TA^{-1/2}|_{\h \ominus\n_0}||<1$. 
	 	 
	 	 Conversely, this last condition shows that $(A-T^*T)|_{\h\ominus \n_0}$ is invertible in 	 	 $\mathcal{B}(\h \ominus \n_0)$, hence 
	 	 $$
	 	 \overline{\mathcal{R}(A-T^*T)}=\h \ominus\n_0=(A-T^*T)(\h \ominus\n_0)=\mathcal{R}(A-T^*T).
	 	 $$
	 	 This concludes the assertion (ii).
	 \end{proof}
	 Finally, we observe that the assertions (i) and (ii) in this proposition are more general than the assertion (i) in Theorem \ref{thm27}, although they are related to each other. An interesting class of operators for which the ranges from the assertions (i) and (ii) coincide will be presented at the end of this paper, as an application of Theorem \ref{thm27}. 
	 
	 \begin{theorem}\label{thm37}
	 	Let $T \in \mathcal{B}(\h)$ be such that $T^*T \leq T^{*2}T^2=T^{*3}T^3$ and $T^*T \n(T^*) \subset \n(T^*)$. Then there exists an invertible operator $A\in \B$ with $T^*T\le T^*AT\le A$ such that $\R(A-T^*T)=\R(A-T^*AT)$ and this range is closed, hence it admits a natural quasi-isometric lifting, which is similar to an isometry.
	 	\end{theorem}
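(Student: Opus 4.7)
My plan is to verify the hypotheses of Theorem \ref{thm27} for a suitable choice of $A$, by first using both hypotheses to uncover the block structure of $T$. The kernel condition $T^*T\n(T^*)\subset \n(T^*)$ lets me write
\[
T=\begin{pmatrix} C & G \\ 0 & 0 \end{pmatrix}\quad\text{on}\quad \h=\overline{\R(T)}\oplus \n(T^*),
\]
with $C^*G=0$, whence $\R(G)\subset \n(C^*)$. The key consequence of $T^{*2}T^2=T^{*3}T^3$ is that, setting $R:=T^{*2}T^2-T^*T\geq 0$, one has $T^*RT=T^{*3}T^3-T^{*2}T^2=0$, so $R^{1/2}T=0$ and $R$ vanishes on $\overline{\R(T)}$; being positive, $R$ is then supported on $\n(T^*)$. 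Reading this off block-by-block gives $C^*C=C^{*2}C^2$, $C^{*2}CG=0$, and $R_1:=G^*C^*CG-G^*G\geq 0$. In particular $C$ is a quasi-isometry on $\overline{\R(T)}$, so $V:=C|_{\overline{\R(C)}}$ is an isometry on the $T$-invariant subspace $\overline{\R(C)}$.

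With this structural information in place, I would further split $\overline{\R(T)}=\overline{\R(C)}\oplus \n(C^*)$ and set
\[
A=L\oplus N\quad\text{on}\quad \h=\overline{\R(T)}\oplus \n(T^*),
\]
where $L=C^*C+\delta P_{\n(C^*)}$ on $\overline{\R(T)}$ and $N=G^*C^*CG+\delta G^*G+\mu I$ on $\n(T^*)$, for fixed $\delta,\mu>0$. The routine verifications are: $C^*P_{\n(C^*)}C=0$ (since $Cx\in \overline{\R(C)}\perp \n(C^*)$), giving $C^*LC=C^{*2}C^2=C^*C$; $C^*LG=0$ (using $C^{*2}CG=0$ and $C^*G=0$); and $G^*LG=G^*C^*CG+\delta G^*G$ (since $\R(G)\subset \n(C^*)$). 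Consequently $T^*AT$ is block diagonal and
\[
A-T^*AT=\delta P_{\n(C^*)}\oplus \mu I,\qquad A-T^*T=\delta P_{\n(C^*)}\oplus (R_1+\delta G^*G+\mu I).
\]
Both operators are positive with common closed range $\n(C^*)\oplus \n(T^*)$, so $T^*T\leq T^*AT\leq A$ and $\R(A-T^*AT)=\R(A-T^*T)$ is closed. The subspace $\n_0=\n(A-T^*AT)=\overline{\R(C)}$ is $T$-invariant because $T|_{\overline{\R(C)}}=V$ takes values in $\overline{\R(C)}$. An appeal to the last assertion of Theorem \ref{thm27} then yields the desired natural quasi-isometric lifting similar to an isometry.

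The step I expect to require the most care is the invertibility of $A$, which reduces to showing $L\geq cI$ on $\overline{\R(T)}$ for some $c>0$. Writing $x=y+z$ with $y\in \overline{\R(C)}$, $z\in \n(C^*)$, and using that $V$ is an isometry, I would estimate $(Lx,x)=\|Vy+C_1z\|^2+\delta \|z\|^2\geq (\|y\|-\|C_1\|\,\|z\|)^2+\delta\|z\|^2$, where $C_1:=C|_{\n(C^*)}$. A short minimization over $\|y\|^2+\|z\|^2=1$ produces the positive lower bound $\delta/(\|C_1\|^2+1)$. Without the isometric behaviour of $V$ on $\overline{\R(C)}$ this estimate would fail; this is the non-trivial place where the equality hypothesis $T^{*2}T^2=T^{*3}T^3$ really gets used.
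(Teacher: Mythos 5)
Your proposal is correct in substance and reaches the result through the same skeleton as the paper (block decomposition of $T$ on $\h=\overline{\R(T)}\oplus\n(T^*)$, an explicit block-diagonal $A$, then Theorem \ref{thm27}), but the construction of $A$ is genuinely different. The paper first notes that the hypothesis $T^*T\le T^{*2}T^2=T^{*3}T^3$ gives, for $k=Th\in\R(T)$, the chain $\|k\|^2\le\|Tk\|^2=\|T^2k\|^2$, so $W:=T|_{\overline{\R(T)}}$ (your $C$) is an \emph{expansive} quasi-isometry, $I\le W^*W=W^{*2}W^2$. It then simply takes $A=W^*W\oplus c^2I$ with $c^2>\|WT_0\|$ (your $G$ is its $T_0$): invertibility of $A$ is immediate from $W^*W\ge I$, one gets $\n_0=\n(A-T^*AT)=\overline{\R(T)}$ and the common closed range $\n(T^*)$, and the paper verifies condition (ii) of Theorem \ref{thm27} (the norm condition) as well as the ranges directly. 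You never derive expansivity; instead you extract $C^{*2}C^2=C^*C$, $C^{*2}CG=0$ and $R_1\ge 0$ all at once from the identity $T^*RT=0\Rightarrow RT=0$ (a clean argument the paper does not use — it gets $W^{*2}WT_0=0$ from the invariance $W^*W\,\n(W^*)\subset\n(W^*)$ instead), and you compensate for the possible non-invertibility of $C^*C$ with the perturbation $\delta P_{\n(C^*)}$, which produces a different kernel $\n_0=\overline{\R(C)}$ and range $\n(C^*)\oplus\n(T^*)$, verifying condition (i) of Theorem \ref{thm27} directly. Both applications of Theorem \ref{thm27} are legitimate. What the paper's route buys is precisely the step you flag as delicate: since your hypotheses do imply $C^*C\ge I$, your operator $L=C^*C+\delta P_{\n(C^*)}\ge I$ is trivially invertible, and no minimization is needed. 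Two small repairs to your argument as written: the estimate $\|Vy+C_1z\|^2\ge(\|y\|-\|C_1\|\,\|z\|)^2$ is false in general (take $y=0$), so the lower bound for $L$ should be argued by cases — if $\|y\|\le\|C_1\|\,\|z\|$ then $(Lx,x)\ge\delta\|z\|^2\ge\delta/(1+\|C_1\|^2)$, otherwise bound below by the smallest eigenvalue of the positive definite $2\times 2$ form $(a-\|C_1\|b)^2+\delta b^2$ — or simply replaced by the expansivity observation; and your closing remark understates the hypothesis: $T^{*2}T^2=T^{*3}T^3$ is also exactly what gives $C^{*2}CG=0$, without which $T^*AT$ would fail to be block diagonal and the whole computation of $A-T^*AT$ would collapse.
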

	 	
	 	\begin{proof}
	 	Let $W=T|_{\overline{\mathcal{R}(T)}}$. Then $W^*W=P_{\overline{\mathcal{R}(T)}}T^*T|_{\overline{\mathcal{R}(T)}}$, so for $h \in \h$ the relations 
	 	$$
	 	(Th, Th)\leq (T^*TTh, Th)=(T^{*2}T^2Th,Th)
	 	$$
	 	 expressed in terms of $W$ become $$I \leq W^*W=W^{*2}W^2.$$
	 	
	 	Therefore $W$ is an expansive quasi-isometry, while $T$ an $T^*T$ have on $\h=\overline{\mathcal{R}(T)} \oplus \n(T^*)$ the representations
	 	\begin{equation} \label{eq34}
	 		T=\begin{pmatrix}
	 			W & T_0 \\
	 			0 & 0
	 		\end{pmatrix}\text{, } \quad T^*T=\begin{pmatrix}
	 		W^*W & 0 \\
	 		0 & T^*_0T_0
	 		\end{pmatrix}
	 	\end{equation}
	 	where $W^*T_0=0$, meaning even the condition $T^*T\n(T^*)\subset \n(T^*)$ from our assumption.
	 	
	 	Clearly, one may suppose $T_0 \neq 0 $ (that is $T$ is not quasi-isometric).
	 	We chose a constant $c$ with $c^2 > ||WT_0||>0$, and consider the operator $A$ on $\h=\overline{\mathcal{R}(T)}\oplus\n(T^*)$ with the representation 
	 	\begin{equation}\label{eq35}
	 	A=\begin{pmatrix}
	 		W^*W &0 \\
	 		0 & c^2I
	 	\end{pmatrix}.
	 	\end{equation}
	 	
	 	Since $W^*W \geq I$ and $W|_{\mathcal{R}(T)}$ is an isometry, we have $W^*W \n(W^*) \subset\n(W^*)$. 
	 	This and the fact that $\mathcal{R}(T_0)\subset \n(W^*)$ give later that $W^{*2}WT_0=0$, which is later useful. 
	 	
	 	More precisely, we have (by \eqref{eq34} and \eqref{eq35})
	 	$$T^*AT=\begin{pmatrix}
	 		W^{*2}W^2 & W^{*2}WT_0 \\
	 		T^*_0W^*W^2 & T^*_0W^*WT_0
	 	\end{pmatrix}=\begin{pmatrix}
	 	W^*W &0 \\
	 	0 & T^*_0W^*WT_0
	 	\end{pmatrix} \geq T^*T,$$
	 	having in view that $W$ is an expansive quasi-isometry. Also, since $||WT_0h||^2<c^2$ for every $h \in \n(T^*)$ with $||h||=1$, we infer from \eqref{eq34} and \eqref{eq35} the representation
	 	$$A-T^*AT=\begin{pmatrix}
	 		0 & 0 \\
	 		0 & c^2I-T^*_0W^*WT_0
	 	\end{pmatrix} \geq 0.
	 	$$
	 	Thus $\n(A-T^*AT)=\overline{\mathcal{R}(T)}$, this subspace being invariant for $T$.
	 	
	 	On the other hand, denoting as usual the modulus of $W$ by $|W|=(W^*W)^{1/2}$, we obtain 
	 	$$A^{1/2}TA^{-1/2}=\begin{pmatrix}
	 		|W|W|W|^{-1} & \frac{1}{c}|W|T_0 \\
	 		0 &0 
	 	\end{pmatrix},$$
	 	where 
	 	$$ 
	 	||(A^{1/2}TA^{-1/2})|_{\n(T^*)}||=\frac{1}{c}|||W|T_0||=\frac{1}{c}||T^*_0W^*WT_0||^{1/2}<1.
	 	$$
	 	
	 	Hence the operators $T$ and $A$ satisfy the conditions of statement (ii) in Theorem \ref{thm27}, which ensures that $T$ has a natural quasi-isometric lifting and similar to an isometry.
	 	
	 	To express the condition (i) of Theorem \ref{thm27} in this case, notice that $A^{1/2}TA^{-1/2}=\widehat{T}$ is even the contraction similar to $T$ by $A^{-1}$, so $$|W|W|W|^{-1}=|W|J$$ is a contraction, where $J$ is the partial isometry from the polar decomposition of $W$.
	 	
	 	Finally, we have $A-T^*T=0 \oplus (c^2I-T^*_0T_0)$ on $\h=\overline{\mathcal{R}(T)} \oplus \n(T^*)$ and $$\mathcal{R}(A-T^*T)=\n(T^*)=\mathcal{R}(A-T^*AT),
	 	$$
	 	 because the operators $c^2I-T^*_0T_0$ and $c^2I-T^*_0W^*WT_0$ are invertible in $\mathcal{B}(\n(T^*))$.
	 \end{proof}
	
	The particular case from the last assertion of Theorem \ref{thm24} can be analyzed, in the context of this theorem. 
	
	\begin{corollary}\label{c38}
	Let $T$ and $A$ be as in Theorem \ref{thm37}, having the representations \eqref{eq34} and \eqref{eq35}, respectively. Then $\R(T)\subset \n(A-I)$ if and only if $T$ is a quasi-isometry.
	\end{corollary}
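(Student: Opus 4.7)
The plan is to translate the inclusion $\R(T) \subset \n(A-I)$ into the operator identity $AT = T$ and then compare it, via the block matrices \eqref{eq34} and \eqref{eq35}, with the identity $T^*T = T^{*2}T^2$ defining a quasi-isometry. I expect both conditions to reduce to the single relation $W^*W T_0 = T_0$, at which point the equivalence is immediate.

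Starting from $\R(T) \subset \n(A-I)$, which is exactly $AT = T$, direct multiplication of the block matrices on $\h = \overline{\R(T)} \oplus \n(T^*)$ gives
\begin{equation*}
AT = \begin{pmatrix} (W^*W)W & (W^*W)T_0 \\ 0 & 0 \end{pmatrix}.
\end{equation*}
Comparing with \eqref{eq34}, the identity $AT = T$ splits into the pair of conditions $W^*W^2 = W$ and $W^*W T_0 = T_0$. The first is automatic: as noted in the proof of Theorem \ref{thm37}, $W$ is an expansive quasi-isometry, hence isometric on $\R(W)$, so $W^*W|_{\R(W)} = I_{\R(W)}$ and therefore $W^*W \cdot Wh = Wh$ for every $h$. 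Consequently, $\R(T) \subset \n(A-I)$ reduces to the single condition $W^*W T_0 = T_0$.

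On the quasi-isometry side, I would compute $T^*T$ and $T^{*2}T^2$ from \eqref{eq34}. Using $W^*T_0 = 0$ and $W^{*2}W T_0 = W^*(W^*W T_0) = 0$ (the latter because $W^*W$ preserves $\n(W^*)$ and $\R(T_0) \subset \n(W^*)$, both recorded in the proof of Theorem \ref{thm37}), one obtains
\begin{equation*}
T^*T = \begin{pmatrix} W^*W & 0 \\ 0 & T_0^*T_0 \end{pmatrix}, \qquad T^{*2}T^2 = \begin{pmatrix} W^{*2}W^2 & 0 \\ 0 & T_0^*W^*W T_0 \end{pmatrix}.
\end{equation*}
Since $W^{*2}W^2 = W^*W$ already, $T$ is a quasi-isometry precisely when $T_0^*(W^*W - I)T_0 = 0$. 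By the positivity $W^*W - I \geq 0$ this is equivalent to $(W^*W - I)^{1/2}T_0 = 0$, hence to $W^*W T_0 = T_0$, matching the condition extracted above.

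The argument is essentially routine block arithmetic; the only subtle point, and the one I would flag as the main obstacle, is recognizing that the auxiliary identities $W^*W^2 = W$ and $W^{*2}W T_0 = 0$ come for free from $W$ being an expansive quasi-isometry together with the kernel condition $W^*W \n(W^*) \subset \n(W^*)$ built into the setting of Theorem \ref{thm37}. Once those are in hand, both sides collapse to the same relation $W^*WT_0 = T_0$ and the equivalence follows.
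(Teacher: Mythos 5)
Your proof is correct, and its opening moves coincide with the paper's: rewrite $\R(T)\subset\n(A-I)$ as $AT=T$, block-multiply against \eqref{eq34} and \eqref{eq35}, and discard the condition $(W^*W-I)W=0$ as automatic for an expansive quasi-isometry. Where you genuinely diverge is in how the surviving condition $W^*WT_0=T_0$ is linked to quasi-isometricity of $T$. The paper argues geometrically: from $\R(W)\oplus\R(T_0)\subset\n(W^*W-I)$ and the identification $\overline{\R(T_0)}=\n(W^*)$, the subspace $\R(W)+\R(T_0)$ is dense in $\overline{\R(T)}$, so $W^*W=I$ there, i.e.\ $W$ is an isometry and hence $T$ is a quasi-isometry; conversely, if $T$ is a quasi-isometry then $W=T|_{\overline{\R(T)}}$ is an isometry, so $A|_{\overline{\R(T)}}=I$ and $AT=T$. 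You instead compute $T^{*2}T^2$ in block form (using $W^*T_0=0$ and $W^{*2}WT_0=0$, both recorded in the proof of Theorem \ref{thm37}) and reduce quasi-isometricity of $T$ to $T_0^*(W^*W-I)T_0=0$, which by positivity of $W^*W-I$ is equivalent to $W^*WT_0=T_0$; both conditions of the corollary thus meet at a single algebraic identity. Your route is more self-contained — it needs no density argument and never uses $\overline{\R(T_0)}=\n(W^*)$ — while the paper's route extracts more structure, making visible that either condition actually forces $W$ to be an isometry. As a side benefit, your version silently corrects a typo in the paper's printed argument, where ``$(W^*W-I)T_0=T_0$'' should read $(W^*W-I)T_0=0$.
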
 
	 
	 \begin{proof}
Assume that $\R(T)\subset \n(A-I)$, i.e., $AT=T$. By \eqref{eq34} and \eqref{eq35} this means $(W^*W-I)W=0$ and $(W^*W-I)T_0=T_0$, the former equality being always assured, because $W$ is an expansive quasi-isometry. Thus $\R(W)\oplus \R(T_0)\subset \n(W^*W-I)\subset \R(T)$. But it is easy to see from the previous proof that $\overline{\R(T_0)}=\n(W^*)$, and on the other hand, as $\n(W^*W-I)\subset \R(T)$ and $W^*W=T^*T|_{\overline{\R(T)}}$, it follows that $\overline{\R(T)}\subset \n(T^*T-I)$. In other words, $T|_{\overline{\R(T)}}=W$ is an isometry and consequently, $T$ is a quasi-isometry.

Conversely, if $T$ is quasi-isometric then $W$ is an isometry on $\overline{\R(T)}$ and $A|_{\R(T)}=I$ in \eqref{eq35}, which means that $AT=T$, in this case.	 
	 \end{proof}
	 
	 In fact, this corollary shows, by Theorem \ref{thm24}, that if $T$ is not a quasi-isometry, which satisfies the hypothesis of Theorem \ref{thm37}, then $T$ has a left invertible and natural quasi-isometric lifting $S$, which does not satisfy the range condition $S^*S\R(S)\subset \R(S)$. This happens even if $\R(A-T^*AT)=\n(T^*)=\R(A-T^*T)$ for some operator $A$, as in the proof of Theorem \ref{thm37}. 
	 
\medskip	 
	 
	 %\subsection*{Acknowledgment}

%We are deeply grateful to the reviewer for their insightful comments and suggestions, which have significantly improved the quality and clarity of this manuscript. Their careful review and constructive feedback were invaluable in refining our work.

{\bf Statements and declarations}

{\bf Founding} The first author, Laurian Suciu, was supported by a project financed by Lucian Blaga University of Sibiu, through the research grant LBUS-IRG-2023, No. 3566/24.07.2023. 
	 
{\bf Data availability statement} Data sharing not applicable to this article as no
data sets were generated or analyzed during the current study.

{\bf Conflict of interest} The authors declare that they have no conflicts of interest relevant to the content of this article.

\newpage	 
\bibliographystyle{amsalpha}

\begin{thebibliography}{99}
	
	%\bibitem{Ag} J. Agler, {\it A disconjugacy theorem for Toeplitz operators}, Am. J. Math. {\bf 112} (1) (1990) 1–14.
	
	%\bibitem{AL} B. Abdullah,  T. Le, {\it The structure of $m$-isometric weighted shift operators}, Operators and Matrices, {\bf 10}, 2 (2016), 319–334.
	%\url{https://dx.doi.org/10.7153/oam-10-17}
	
	\bibitem{AS1} J. Agler, M. Stankus, \textit{$m$-isometric transformations of Hilbert spaces}, Integral Equations Operator Theory \textbf{21} (4) (1995), 383--429.
	%\url{https://doi.org/10.1007/BF01222016}
	
	\bibitem{AS2} J. Agler, M. Stankus, \textit{$m$-isometric transformations of Hilbert spaces II}, Integral Equations Operator Theory \textbf{23} (1) (1995), 1--48.
	%\url{https://doi.org/10.1007/BF01261201}
	
	\bibitem{AS3} J. Agler, M. Stankus, \textit{$m$-isometric transformations of Hilbert spaces III}, Integral Equations Operator Theory \textbf{24} (4) (1996), 379--421.
	%\url{https://doi.org/10.1007/BF01191619}
	
	\bibitem{Aleman} A. Aleman, {\it The multiplication operator on Hilbert spaces of analytic functions}, Habilitationsschrift, Fern Universit\"{a}t, Hagen, 1993.
	
	%\bibitem{ARS} A. Aleman, S. Richter, and C. Sundberg, {\it Beurling's Theorem for the Bergman space}, Acta Math. Vol. {\bf 177}, Number 2 (1996), 275-310.
	%\url{https://doi:10.1007/BF02392623}
	
	%\bibitem{ACJS} A. Anand, S. Chavan, Z. J. Jab{\l}o\'{n}ski, J. Stochel,
	%\textit{A solution to the Cauchy dual subnormality problem for 2-isometries}, J. Funct. Anal. \textbf{277} (12) (2019), 108292.
	%\url{https://doi.org/10.1016/j.jfa.2019.108292}
	
	%\bibitem{B} C. Badea, \textit{Operators near completely polynomially dominated ones and similarity problems}, J. Operator. Th., {\bf 49} (2003), 3-23.
	
	\bibitem{BMS} C. Badea, V. M\"uller, L. Suciu, \textit{High order isometric liftings and dilations}, Studia Math. {\bf 258} (1), (2021), 87–101.
	%\url{https://DOI: 10.4064/sm200330-25-8}.
	
	%\bibitem{BS1} C. Badea, L. Suciu, \textit{The Cauchy dual and $2$-isometric liftings of concave operators}, J. Math. Anal. Appl. \textbf{472} (2019), 1458--1474.
	%\url{https://doi.org/10.1016/j.jmaa.2018.12.002}
	
	\bibitem{BS2} C. Badea, L. Suciu, \textit{Hilbert space operators with two-isometric dilations}, J. Oper. Theory {\bf 86} (1), (2021), 93–123.
	%\url{https://doi: 10.7900/jot.2020feb05.2298}
	
	%\bibitem{BMMZ} T. Berm\'udez, A. Martin\'on, V. M\"uller, H. Zaway, {\it On invertible $m$-isometrical extension of an $m$-isometry}, Results in Mathematics {\bf 77}, Article number: 233 (2022), 1-21.
	
	%\bibitem{Ch} S. Chavan, \textit{On operators close to isometries}, Studia Math. \textbf{186(3)} (2008), 275–-293.
	%\url{https://doi:10.4064/sm186-3-6}
	
	%\bibitem{Chavan} S. Chavan, S. Trivedi, {\it Failure of the wandering subspace property for analytic norm-increasing $3$-isometries}, (2023) arXiv:2212.04446, 1-16.  
	
	%\bibitem{CMS} G. Corach, A. Maestripieri, D. Stojanoff, \textit{A classification of projectors}, Topological Algebras, their Applications, and related topics, Banach Center  Publications,   Vol. {\bf 67}, 
	%Institute of Mathematics Polish Academy of Sciences, Warszawa 2005, 145-160.
	
	%\bibitem{CS} A. Cr\u{a}ciunescu, L. Suciu, {\it Brownian extensions in the context of 3-isometries}, J. Math. Anal. Appl. {\bf 529} (2024) 127591, 1-19.
	
	\bibitem{FF} C. Foias, A. E. Frazho, \textit{The Commutant Lifting Approach to Interpolation Problems}, Birkh\"auser Verlag, Basel-Boston-Berlin, 1990.
	
	%\bibitem{GGR} S. Ghara, R. Gupta, and R. Reza, \textit{Analytic $m$-isometries and weighted Dirichlet-type spaces}, Journal of Operator Theory, {\bf 88}, Issue 2, (2022), 445-477.
	
	%\bibitem{JKKL} S. Jung, Y. Kim, E. Ko, J. E. Lee, {\it On $(A,m)$-expansive operators}, Studia Mathematica {\bf 213}, 1, (2012), 3-23.
	%\url{https://DOI: 10.4064/sm213-1-2}
	
	%\bibitem{Kub-1997} C.S. Kubrusly, {\it An Introduction to Models and Decompositions in Operator Theory}, Birkh\"{a}user,
	%Boston, 1997.
	
	
	%\bibitem{MajSu} W. Majdak, L. Suciu, \textit{Brownian isometric parts of concave operators}, New York J. Math. \textbf{25} (2019), 1067--1090.
	
	%\bibitem{MajSuRes} W. Majdak, L. Suciu, \textit{Brownian type parts of operators in Hilbert spaces}, Results Math \textbf{75},  Article no. 5 (2020).
	%\url{https://doi.org/10.1007/s00025-019-1130-8}
	
	\bibitem{MajSu-lift} W. Majdak, L. Suciu, \textit{Triangulations of operators with two-isometric liftings}, Integral Equations and Operator Theory, (2021) 93:10, 1-24.
	%\url{https://doi.org/10.1007/s00020-021-02625-9}
	
	%\bibitem{MajSu-convex} W. Majdak, L. Suciu, \textit{Convex and expansive liftings close to two-isometries and power bounded operators}, Linear Algebra and its Applications, 617 (2021), 1–26.
	%\url{https://doi.org/10.1016/j.laa.2021.01.009}
	
	%\bibitem{MS1} M. Mbekhta, L. Suciu, {\it Classes of operators similar to partial isometries}, Integral Equations and Operator Theory, {\bf 63}, Number 4, (2009), 571-590.
	
	%\bibitem{M} V. M\"{u}ller, {\it Models for operators using weighted shifts}, J. Operator Theory \textbf{20} (1) (1988), 3--20.
	
	\bibitem{McC} S. McCullough, {\it SubBrownian operators}, J. Operator Th. {\bf 22} (1989), 291--305.
	
	%\bibitem{Ol} A. Olofsson, \textit{A von Neumann-Wold decomposition of two-isometries}, Acta Sci Math (Szeged) \textbf{70} (3--4) (2004), 715--726.
	
	%\bibitem{Ol2} A. Olofsson, \textit{Parts of adjoint weighted shifts} J. Operator Theory {\bf 74} 2 (2015), 249–280.
	
	\bibitem{Richter0} S. Richter, \textit{A representation theorem for cyclic analytic two-isometries}, Trans. Amer. Math. Soc. \textbf{328} (1991), 325--349.
	%\url{https://doi.org/10.1090/S0002-9947-1991-1013337-1}
	
	%\bibitem{Richter} S. Richter, \textit{Invariant subspaces of the Dirichlet shift},  J. Reine Angew. Math. \textbf{386} (1988), 205--220.
	%\url{https://doi.org/10.1515/crll.1988.386.205}
	
	%\bibitem{Sh} S. Shimorin, \textit{Wold-type decompositions and wandering subspaces for operators close to isometries}, J. Reine Angew. Math. \textbf{531} (2001), 147--189.
	%\url{https://doi.org/10.1515/crll.2001.013}
	
	%\bibitem{S-2009} L. Suciu, \textit{Maximum $A$-isometric part of an $A$-contraction and applications}, Israel J. Math. \textbf{174} (2009), 419--442.
	%\url{https://doi.org/10.1007/s11856-009-0121-y}
	
\bibitem{S-2006} L. Suciu, {\it Some invariant subspaces for $A$-contractions
     and applications}, Extracta Mathematicae {\bf 21} (3) (2006),
     221-247.	
	
	\bibitem{S-2020} L. Suciu, \textit{On operators with two-isometric liftings}, Complex Anal. Oper. Theory {\bf 14}, Article no. 5 (2020).
	%\url{https://doi.org/10.1007/s11785-019-00960-9}
	
	%\bibitem{S-ext} L. Suciu, \textit{Liftings and extensions for operators in Brownian setting}, Linear and Multilinear Algebra, 1-18 (2020).
	%\url{https://doi.org/10.1080/03081087.2020.1819948}
	
	\bibitem{S_Monast} L. Suciu, \textit{Operators with expansive m-isometric liftings}, Monatshefte für Mathematik {\bf 198} (2022), 165–187.
	
	%\bibitem{S-2023} L. Suciu {\it Brownian Type Extensions for a Class
	%	of $m$-Isometries}, Results Math. (2023) 78:144, 1-29.
	
	%\bibitem{ST} L. Suciu, E. A. Totoi, {\it Three‑isometric liftings with invariant isometric part}, Banach J. Math. Anal. (2021) 15:66.
	
	 \bibitem{SS1}
	L. Suciu, A-M. Stoica, {\it Quasi-isometric liftings for operators similar to contractions}, Linear Algebra and its Applications \textbf{709}, (2025), 40-57.
	%https://doi.org/10.1016/j.laa.2025.01.006.
	
	\bibitem{SFb} B. Sz.-Nagy, C. Foias, H. Bercovici, L. K\'erchy, \textit{Harmonic Analysis of Operators on Hilbert Space},
	2nd ed., Revised and enlarged edition, Universitext, Springer, New York, 2010.
	
\end{thebibliography}

\end{document}